\theoremstyle{plain}
\newtheorem{thm}{Theorem}[section]
\newtheorem{lem}[thm]{Lemma}
\newtheorem{prop}[thm]{Proposition}
\newtheorem{cor}[thm]{Corollary}
\theoremstyle{definition}
\newtheorem{rmk}[thm]{Remark}
\newtheorem{defn}[thm]{Definition}
\numberwithin{equation}{section}
\newcommand{\overbar}[1]{\mkern 1.5mu\overline{\mkern-1.5mu#1\mkern-1.5mu}\mkern 1.5mu}
\newcommand{\tif}{\ \text{if}\ }
\newcommand{\tand}{\ \text{and}\ }
\newcommand{\Orb}{\operatorname{Orb}}
\newcommand{\dOrb}{\operatorname{∂Orb}}
\newcommand{\diag}{\operatorname{diag}}
\newcommand{\Supp}{\operatorname{Supp}}
\newcommand{\Def}{\operatorname{Def}}
\newcommand{\Int}{\operatorname{Int}}
\newcommand{\Vol}{\operatorname{Vol}}
\newcommand{\Aut}{\operatorname{Aut}}
\newcommand{\Hom}{\operatorname{Hom}}
\newcommand{\End}{\operatorname{End}}
\newcommand{\Lie}{\operatorname{Lie}}
\newcommand{\Spf}{\operatorname{Spf}}
\newcommand{\id}{\mathrm{id}}
\newcommand{\len}{\operatorname{len}}
\newcommand{\Stab}{\operatorname{Stab}}
\newcommand{\tensor}{\otimes}
\newcommand{\iso}{\cong}
\newcommand{\righta}{\rightarrow}
\newcommand{\mbC}{\mathbb{C}}
\newcommand{\mbF}{\mathbb{F}}
\newcommand{\mbL}{\mathbb{L}}
\newcommand{\mbX}{\mathbb{X}}
\newcommand{\mbZ}{\mathbb{Z}}
\newcommand{\mcO}{\mathcal{O}}
\newcommand{\mfm}{\mathfrak{m}}
\begin{document}

\title{An Arithmetic Transfer Identity}
\author{Andreas Mihatsch}
\maketitle

\section{Introduction}
In \cite{zhang}, W. Zhang introduces his so-called Arithmetic Fundamental Lemma (AFL). This is a conjectural identity between certain derivatives of orbital integrals and certain intersection products in unitary Rapoport-Zink spaces, see \cite[Conjecture 2.9]{zhang}. Up to now, this conjecture has only been verified in the low dimensional cases $n=2,3$ by Zhang himself and in arbitrary dimension, but under restrictive conditions, in \cite{rapoportterstiegezhang}. The method of proof is always an explicit computation of both sides of the identity.

In the present work we restrict to the case $n=2$ to formulate and verify a variation of the AFL. The idea for this variant is due to W. Zhang and was communicated to us by M. Rapoport. We now explain our main results in detail. We will elaborate on the following definitions later in the paper.

Throughout this work we fix a prime $p\neq 2$ and a quadratic extension $E/F$ of $p$-adic fields. We denote their rings of integers be $\mcO_F⊂\mcO_E$ and fix uniformizers $π_F$ and $π_E$. Let $\mbF_q$ be the residue field of $F$. Denote the non-trivial automorphism of $E/F$ by $σ:a\mapsto \overbar{a}$ and let $η$ be the quadratic character of $F^\times$ associated to $E/F$ by class field theory. For any positive integer $s$, we denote by $\mcO_s:=\mcO_F+π_F^s\mcO_E$ the order of conductor $s$.

We embed $E^\times$ into $GL_2(E)$ via $x\mapsto \diag(x,1)$. In this way, $E^\times$ acts by conjugation on $GL_2(E)$. An element $γ∈GL_2(E)$ is called regular semi-simple if its stabilizer under this action is trivial and if its orbit is Zariski closed.

Let $S(F):= \{γ∈GL_2(E)\mid γ\overbar {γ}=1\}$ and let $S(F)_{\rm{rs}}$ denote its regular semi-simple elements. The symmetric space $S(F)$ is stable under conjugation by $F^\times$. We normalize the Haar measure on $F^\times$ such that $\Vol(\mcO^\times_F)=1$. For $γ∈S(F)_{\rm{rs}}$, for $f∈C^\infty_c(S(F))$ and for $s∈\mbC$, we define the following integrals:
\begin{align*}
\Orb_γ(f,s)\ :=&\ \int_{F^\times} f(h^{-1}γh)η(h)|h|^sdh,\\
\Orb_γ(f)\ :=&\ \Orb_γ(f,0),\\
\dOrb_γ(f)\ :=&\ \left. \frac{d}{ds}\right\vert_{s=0}\Orb_γ(f,s).
\end{align*}
These integrals are absolutely convergent since, for regular semi-simple $γ$, the intersection $F^\times γ\cap\Supp(f)$ is again compact.

Let $U(1)$ be the unitary group for a one-dimensional hermitian space for $E/F$. In particular, $U(1)(F)=\{x∈E\mid N_{E|F}(x)=1\}$. We define the two unitary groups $U_0:=U(1\oplus 1)$ and $U_1:=U(ε\oplus 1)$ where $ε∈F^\times$ is not a norm. The groups $U_0(F)$ and $U_1(F)$ are subgroups of $GL_2(E)$, stable under conjugation by $U(1)(F)$. We denote their regular semi-simple elements by $U_0(F)_{\mathrm{rs}}$ and $U_1(F)_{\mathrm{rs}}$ respectively. For $i\in \{0,1\}$, for $δ∈U_i(F)_{\rm{rs}}$ and for $φ∈C^\infty_c(U_i(F))$, we define
$$\Orb_δ(φ)=\int_{U(1)(F)} φ(h^{-1}γh)dh.$$
Here the Haar measure is normalized such that $U(1)(F)$ has volume $1$.

Two elements $γ∈S(F)_{\rm{rs}}$ and $δ∈U_i(F)_{\rm{rs}}$ are said to match if they are conjugate under $E^\times$. By \cite[Lemma 2.3]{zhang}, this relation defines a bijection of regular semi-simple orbits (of the actions of $F^\times$ and $U(1)(F)$):
$$[S(F)_{\rm{rs}}]\iso [U_0(F)_{\rm{rs}}]\sqcup [U_1(F)_{\rm{rs}}].$$

A function $f∈C^\infty_c(S(F))$ and a pair of functions $(φ_0,φ_1)∈C^\infty_c(U_0(F))\times C^\infty_c(U_1(F))$ are said to be transfers of each other if for all $γ∈S(F)_{\rm{rs}}$,
$$ω(γ)\Orb_γ(f)=\begin{cases} \Orb_δ(φ_0) & \tif γ \text{ matches } δ∈U_0(F)\\
                          \Orb_δ(φ_1) & \tif γ \text{ matches } δ∈U_1(F).\end{cases}$$
The transfer factor $ω(γ)∈\mbC^\times$ will be defined in Section 3.

Now let $\breve E/\breve F$ be the completions of the maximal unramified extensions of $E$ and $F$ with rings of integers $\mcO_{\breve F}⊂\mcO_{\breve E}$. Let $\mbF$ be their residue field and $\mbX/\mbF$ the unique formal $p$-divisible $\mcO_F$-module of height $2$ and dimension $1$. Let $D$ be the quaternion algebra over $F$ with ring of integers $\mcO_D$ and standard involution $ι:a\mapsto a^*$. We fix an isomorphism $\mcO_D\iso\End(\mbX)$ and endow $\mbX$ with an $\mcO_F$-linear principal polarization such that the Rosati involution induces the standard involution on $\mcO_D$.

In addition, we endow $\mbX$ with an action of $\mcO_E$ such that an element $a∈\mcO_E$ acts on $\Lie(\mbX)$ via the structure morphism $\mcO_E→\mbF$. This action is induced by an embedding $\mcO_E⊂\mcO_D$. We define $\overbar{\mbX}$ to be the same $p$-divisible group (with the same $\mcO_D$-action), but with the $σ$-conjugated action of $\mcO_E$.

Now let $X_i$ and $Y_j$ be two quasi-canonical lifts of $\mbX$ of levels $i$ and $j$, defined over a finite extension $A/\mcO_{\breve F}$ of ramification index $e$. We refer to \cite{wewers} for the definition and properties of such lifts. We define
\begin{equation*}\mbX^{(2)}:=\begin{cases}\mbX\times \overbar{\mbX} & \tif E/F \text{ ramified or if } i+j \text{ is even}\\
                                          \mbX\times \mbX & \tif E/F \text{ unramified and if } i+j \text{ is odd}.\end{cases}\end{equation*}
The group $\mbX^{(2)}$ is endowed with the diagonal action of $\mcO_E$ and the diagonal polarization.

Denote by $G⊂\Aut^0_{\mcO_E}(\mbX^{(2)})$ the group of $\mcO_E$-linear quasi-isogenies which preserve the polarization. Then $G⊂GL_2(D)$ is a unitary group and there is an isomorphism
$$G\iso \begin{cases}U_1(F) & \tif E/F \text{ ramified or if } i+j \text{ is even}\\
                     U_0(F) & \tif E/F \text{ unramified and if } i+j \text{ is odd},\end{cases}$$
as explained at the beginning of Section 4. We say that $γ∈S(F)_{\rm{rs}}$ matches $g∈G$, if it matches an element in the correct unitary group which maps to $g$ under this isomorphism.

For any $g∈G$, let $\Int(g)$ be the $\mcO_{\breve F}$-length of the maximal closed subscheme of $\Spf(A)$ to which $g$ deforms as automorphism of $X_i\times Y_j$.

We define $K_{i,j}:=\Stab(\mcO_i\oplus \mcO_j)\cap U_0(F)$ if $E/F$ is ramified or if $i+j$ is even. Otherwise we set $K_{i,j}:=\Stab(\mcO_i\oplus \mcO_j)\cap U_1(F)$. In either case, $1_{K_{i,j}}$ is the characteristic function of $K_{i,j}$. Then our main results are the following two theorems.

\begin{thm}\label{firstthm}
There exists a function $f∈C^\infty_c(S(F))$ which is a transfer of 
$$\begin{cases} (e\cdot 1_{K_{i,j}},0) & \tif E/F \text{ ramified or if }i+j \text{ is even}\\
                (0,e\cdot 1_{K_{i,j}}) & \tif E/F \text{ unramified and if } i+j \text{ is odd}\end{cases}$$
with the following property. For any $γ∈S(F)_{\rm{rs}}$ matching $g$ in $G$, the length $\Int(g)$ is finite and there is an equality
\begin{align}\label{firststmt}ω(γ)\dOrb_γ(f)=\Int(g)\cdot \log q.\end{align}
\end{thm}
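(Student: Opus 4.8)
\emph{Strategy.} The plan is to compute both sides of \eqref{firststmt} explicitly in terms of the invariants of a regular semi-simple orbit and then compare. First I would fix a parametrization of the orbit set $[S(F)_{\mathrm{rs}}]$ and, compatibly through the matching bijection, of $[U_0(F)_{\mathrm{rs}}]\sqcup[U_1(F)_{\mathrm{rs}}]$, by a small set of arithmetic invariants — for $n=2$ essentially the reduced characteristic polynomial data of $\gamma$, subject to the relation $\gamma\overbar{\gamma}=1$ — and re-express $\Orb_\gamma(f,s)$, the transfer condition, and $\Int(g)$ in those coordinates. Since the measures live on $F^\times$ and on the compact group $U(1)(F)$ and the test functions are (sums of) characteristic functions of compact open sets, each $\Orb_\gamma(f,s)$ is a \emph{finite} sum $\sum_h\eta(h)q^{-sv(h)}$ over an explicit finite set of $h\in F^\times/\mcO_F^\times$; hence $\Orb_\gamma(f)$ is a signed count and $\dOrb_\gamma(f)=\log q\cdot\sum_h\eta(h)v(h)$, and \eqref{firststmt} becomes the purely arithmetic identity $\omega(\gamma)\sum_h\eta(h)v(h)=\Int(g)$.

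\emph{Geometric side.} Next I would compute $\Int(g)$ from the deformation theory of quasi-canonical lifts. Writing $g\in G$ as a $2\times2$ matrix of quasi-isogenies between the two factors of $\mbX^{(2)}$, the condition that $g$ deform over $A/\pi_A^N$ to an $\mcO_E$-linear, polarization-preserving automorphism of $X_i\times Y_j$ unwinds into congruence conditions on the entries of $g$ relative to the orders $\mcO_i,\mcO_j$ and the quasi-canonical lattice chains; the exact level to which each entry deforms is governed by Gross's intersection formula for quasi-canonical liftings and its extension by Wewers. Assembling the four entries subject to the polarization constraint yields a closed formula for $\Int(g)$ in the invariants above, and in particular exhibits finiteness of $\Int(g)$ precisely when the stabilizer of $g$ is trivial, i.e. when $\gamma$ is regular semi-simple. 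This has to be carried out in the two regimes into which the whole setup already branches — $E/F$ ramified or $i+j$ even, versus $E/F$ unramified and $i+j$ odd — and one must also track the auxiliary field $A$ and its ramification index $e$, which is where the factor $e$ in the statement originates.

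\emph{Analytic side and choice of $f$.} On the analytic side I would first produce a transfer $f$ of the prescribed pair. Smooth transfer between $C^\infty_c(S(F))$ and $C^\infty_c(U_0(F))\times C^\infty_c(U_1(F))$ is available in this low-rank situation, so some transfer $f_0$ exists; since any two transfers of the same pair differ by a function with vanishing regular semi-simple orbital integrals — a class of functions whose derivative data $\gamma\mapsto\dOrb_\gamma(h)$ can be prescribed rather freely away from the singular set — I would correct $f_0$ by an explicit function built from characteristic functions of the orders $\mcO_s\subset\mcO_E$ so that $\omega(\gamma)\dOrb_\gamma(f)$ matches the geometric formula of the previous step. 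Concretely I would verify the transfer identity $\omega(\gamma)\Orb_\gamma(f)=\Orb_\delta(e\cdot 1_{K_{i,j}})$ as an equality of signed counts, where the right-hand side is $e$ times the volume of $\{h\in U(1)(F):h^{-1}\delta h\in K_{i,j}\}$ and is again explicit in the invariants; then $\dOrb_\gamma(f)$ is read off from the same data, with careful attention to the transfer factor $\omega(\gamma)$ of Section 3.

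\emph{Main obstacle.} I expect the principal difficulty to be the geometric computation of $\Int(g)$ together with its final matching to the analytic side. Controlling the deformations of a product of two quasi-canonical lifts of possibly different levels, carrying an $\mcO_E$-action and a polarization, forces one to combine the Gross--Wewers formulas case by case and to handle the interaction of the off-diagonal entries of $g$ with the level structures; and once $\Int(g)$ is known, the comparison with $\dOrb_\gamma(f)$ pins down the normalization of $\omega(\gamma)$, which is the classical source of stray signs and constants in identities of AFL type. A secondary point requiring care is that the correction of $f_0$ fixing the derivative must stay within the class of transfers — i.e. that the discrepancy genuinely lies in the image of $h\mapsto(\gamma\mapsto\omega(\gamma)\dOrb_\gamma(h))$ on functions with vanishing orbital integrals — which I would address by writing the corrector down explicitly rather than invoking an abstract surjectivity.
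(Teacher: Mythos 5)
Your proposal takes a genuinely different route from the paper, and the route as described has a gap at its central step. The paper does \emph{not} compute both sides in closed form. Its proof of Theorem \ref{firstthm} is a two-line deduction from the arithmetic transfer identity (Theorem \ref{secondthm}): take any transfer $f_0$ of the prescribed pair, observe that the discrepancy $ω(γ)\dOrb_γ(f_0)-\Int(g)\log q$ equals $ω(γ)\Orb_γ(f_{\rm{corr}})$ for some $f_{\rm{corr}}$, and then absorb this correction using Lemma \ref{basicsonorbitalintegrals}: there is an $f_1$ with $\dOrb_γ(f_1)=\Orb_γ(f_{\rm{corr}})$ and all $\Orb_γ(f_1)=0$, so $f=f_0+f_1$ works. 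The entire content is thus in showing that the discrepancy is an orbital integral, and the paper's tool for this is the germ expansion (Theorem \ref{thmorbints} and Corollaries \ref{charactorbints}, \ref{charactderivorbints}): a function on $S(F)_{\rm{rs}}$ is of the form $γ\mapsto\Orb_γ(f')$ if and only if it is $η$-invariant, compactly supported modulo $F^\times$, and admits locally constant germ coefficients near the degenerate locus $B_0$.

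The gap in your plan is exactly at this point. You propose to ``correct $f_0$ by an explicit function ... so that $ω(γ)\dOrb_γ(f)$ matches the geometric formula,'' and you call the verification that the discrepancy lies in the right class a ``secondary point'' to be handled by writing the corrector down explicitly. But writing the corrector down requires knowing $\dOrb_γ(f_0)$ explicitly, which requires an explicit $f_0$ — and the paper stresses that no natural or convenient explicit transfer exists here (this is the essential difference from the AFL case, where $1_K$ is given). Moreover, both $ω(γ)\dOrb_γ(f_0)$ and $\Int(g)$ diverge (grow linearly in $v(1-N(a(γ)))$) as $γ$ approaches $B_0$, so the correctability of the discrepancy is not a soft statement: one must prove that the divergent parts cancel exactly, leaving a locally constant remainder near $B_0$. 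The paper's key observation making this possible without an explicit $f$ is that the transfer condition \emph{alone} pins down the leading germ coefficients of $\Orb_γ(f,s)$ near $B_0$ (formula \eqref{solutiontransfer} gives $A_0=\pm\tfrac{1}{2}η(\overbar{b}/b)e\cdot β$, $A_1=\tfrac{1}{2}e\cdot β$), hence the coefficient $\tfrac{e}{2}v(1-N(a))$ in $ω(γ)\dOrb_γ(f)$, which is then matched against the asymptotics of $\Int(g)$ from Theorem \ref{thm1}. Your geometric-side plan (Gross--Keating--Wewers formulas for the four matrix entries; note that the polarization plays no role in the deformation problem itself, since $X_i\times Y_j$ lifts factorwise) is essentially the paper's, but without the germ-expansion input on the analytic side the comparison cannot be closed.
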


\begin{thm} (Arithmetic transfer identity)\label{secondthm}
For any function $f∈C^\infty_c(S(F))$ which is a transfer of
$$\begin{cases} (e\cdot 1_{K_{i,j}},0) & \tif E/F \text{ ramified or if } i+j \text{ is even}\\
                (0,e\cdot 1_{K_{i,j}}) & \tif E/F \text{ unramified and if } i+j \text{ is odd,}\end{cases}$$
there exists a function $f_{\rm{corr}}∈C^\infty_c(S(F))$ with the following property. For any $γ∈S(F)_{\rm{rs}}$ matching $g$ in $G$, the length $\Int(g)$ is finite and there is an equality
\begin{align}\label{secondstmt}ω(γ)\dOrb_γ(f)=[\Int(g)+ω(γ)\Orb_γ(f_{\rm{corr}})]\cdot \log q.\end{align}
\end{thm}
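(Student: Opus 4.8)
The plan is to deduce Theorem~\ref{secondthm} from Theorem~\ref{firstthm} together with a description of the kernel of the orbital integral map on $S(F)$. Fix the pair of functions appearing in both theorems and let $f_0\in C^\infty_c(S(F))$ be the particular transfer furnished by Theorem~\ref{firstthm}, so that $\omega(\gamma)\dOrb_\gamma(f_0)=\Int(g)\cdot\log q$ for every $\gamma\in S(F)_{\rm rs}$ matching $g\in G$, and $\Int(g)$ is finite. Given an arbitrary transfer $f$ of the same pair $(\varphi_0,\varphi_1)$, put $h:=f-f_0$. Since both $f$ and $f_0$ are transfers of $(\varphi_0,\varphi_1)$, one has $\omega(\gamma)\Orb_\gamma(f)=\omega(\gamma)\Orb_\gamma(f_0)$ for all $\gamma\in S(F)_{\rm rs}$, and as $\omega(\gamma)\in\mbC^\times$ this forces $\Orb_\gamma(h)=0$. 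Thus it suffices to produce, for any $h\in C^\infty_c(S(F))$ with vanishing regular semi-simple orbital integrals, a function $f_{\rm corr}\in C^\infty_c(S(F))$ such that $\dOrb_\gamma(h)=\Orb_\gamma(f_{\rm corr})\cdot\log q$ for all $\gamma\in S(F)_{\rm rs}$: granting this, $\omega(\gamma)\dOrb_\gamma(f)=\omega(\gamma)\dOrb_\gamma(f_0)+\omega(\gamma)\dOrb_\gamma(h)=[\Int(g)+\omega(\gamma)\Orb_\gamma(f_{\rm corr})]\cdot\log q$.

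Next I would isolate the two elementary computations that make the reduced statement work. First, if $h$ is supported on the closed subset $S(F)\setminus S(F)_{\rm rs}$, then $\dOrb_\gamma(h)=0$ for every $\gamma\in S(F)_{\rm rs}$, because the whole $F^\times$-conjugacy orbit of a regular semi-simple element consists of regular semi-simple elements. Second, for $x\in F^\times$, $f'\in C^\infty_c(S(F))$ and ${}^xf'(y):=f'(x^{-1}yx)$, the substitution $z=yx$ in the defining integral yields $\Orb_\gamma({}^xf',s)=\eta(x)|x|^{-s}\Orb_\gamma(f',s)$, whence (using that the factor $|x|^{-s}-1$ vanishes at $s=0$)
\begin{equation*}
\dOrb_\gamma\bigl({}^xf'-\eta(x)f'\bigr)=-\eta(x)\log|x|\cdot\Orb_\gamma(f')=\Orb_\gamma\bigl(v(x)\,\eta(x)\,f'\bigr)\cdot\log q ,
\end{equation*}
$v$ denoting the normalized valuation of $F$. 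So for such an $\eta$-twisted coboundary one may take $f_{\rm corr}=v(x)\eta(x)f'$, and by linearity the same recipe handles any finite sum of $\eta$-twisted coboundaries plus a term supported off $S(F)_{\rm rs}$.

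It remains to establish the key local input: every $h\in C^\infty_c(S(F))$ with $\Orb_\gamma(h)=0$ for all $\gamma\in S(F)_{\rm rs}$ is a finite sum $h=h_0+\sum_k\bigl({}^{x_k}f_k'-\eta(x_k)f_k'\bigr)$ with $h_0\in C^\infty_c(S(F))$ supported on $S(F)\setminus S(F)_{\rm rs}$, $x_k\in F^\times$ and $f_k'\in C^\infty_c(S(F))$; equivalently, the kernel of $f\mapsto(\gamma\mapsto\Orb_\gamma(f))$ is spanned by the functions supported off $S(F)_{\rm rs}$ together with the $\eta$-twisted coboundaries. I expect this to be the main obstacle. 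On the open locus $S(F)_{\rm rs}$ the quotient $[S(F)_{\rm rs}]$ is a well-behaved locally compact space and $S(F)_{\rm rs}\to[S(F)_{\rm rs}]$ is an $F^\times$-torsor, so there the statement is the standard description of $\eta$-twisted coinvariants of compactly supported smooth functions on a torsor, obtained via a partition of unity; the genuine difficulty lies near the lower-dimensional locus of non-regular elements of $S(F)$, where $F^\times$-orbits degenerate. For $n=2$ this locus is completely explicit, and I would analyse it directly by means of the invariants that classify $F^\times$-orbits on $S(F)$, showing that after subtracting coboundaries supported in a neighbourhood of it the vanishing of all orbital integrals pushes the remaining support off $S(F)_{\rm rs}$ entirely. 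With this lemma in place, Theorem~\ref{secondthm} follows from the reduction above.
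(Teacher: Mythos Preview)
Your reduction reverses the paper's logical order: in the paper, Theorem~\ref{firstthm} is deduced \emph{from} Theorem~\ref{secondthm} (see the proof of Theorem~\ref{firstthmB}), so invoking the particular $f_0$ furnished by Theorem~\ref{firstthm} in order to prove Theorem~\ref{secondthm} is circular. The paper proves Theorem~\ref{secondthm} directly: it reads off the germ expansion of $\dOrb_\gamma(f)$ near $B_0$ via Corollary~\ref{charactderivorbints}, compares the leading term $\tfrac{e}{2}\,v(1-N(a))\log q$ with the asymptotic growth of $\Int(g)$ coming from the explicit deformation formula of Theorem~\ref{thm1}, and shows the difference $o(\gamma)=\omega(\gamma)\dOrb_\gamma(f)-\Int(g)\log q$ is locally constant near $B_0$, hence of the form $\omega(\gamma)\Orb_\gamma(f_{\rm corr})\log q$ by Corollary~\ref{charactorbints}. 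No prior ``good'' $f_0$ enters; the geometric computation of Section~2 is confronted head-on with the harmonic analysis.

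Your ``key local input'' also has a gap. The term $h_0$ supported on $S(F)\setminus S(F)_{\rm rs}$ is vacuous: that locus is closed and nowhere dense in $S(F)$, so any locally constant function supported there is identically zero. What remains is the assertion that the kernel of $f\mapsto(\gamma\mapsto\Orb_\gamma(f))$ is spanned by $\eta$-twisted coboundaries, which you do not prove and which is exactly the delicate point near $B$. The paper avoids this by establishing only the weaker statement you actually need---that $\dOrb_\gamma(h)$ is an orbital integral whenever $\Orb_\gamma(h)\equiv 0$---directly from the germ expansion (Corollary~\ref{maincor}): vanishing of all $\Orb_\gamma(h)$ forces the leading coefficients $C_0,C_1$ in the germ expansion of $h$ to vanish, so $\dOrb_\gamma(h)$ retains only the constant part of its expansion and Corollary~\ref{charactorbints} applies.
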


These results can be considered as part of the program initiated by W. Zhang whose aim is to extend the range of the applicability of the AFL conjecture. The AFL conjecture is formulated for an unramified extension $E/F$ and ``trivial level structure'' $i=j=0$. The involved Rapoport-Zink spaces are formally smooth and the geometric side of the AFL is defined via intersection theory. Furthermore, the function on $S(F)$ whose orbital integrals are supposed to express the arithmetic intersection products in question is explicitly given and of a very simple nature. Also its transfer to the unitary side is explicitly given. 

By contrast, our extension $E/F$ is possibly ramified and $i,j\geq 0$. Then the function from Theorem \ref{firstthm} is no longer explicit. Although it is possible to write down such a function $f$ in coordinates, there is neither a natural nor a convenient choice. Note that even if $i=j=0$, we do not know a natural choice for $f$ in the ramified situation.

Note that there is a list of moduli problems where one can conjecture an arithmetic transfer identity, see \cite{RSZ}. In all these cases, the involved Rapoport-Zink spaces are regular. Our problem does not appear on this list since for $i,j>0$, there appear non-regular moduli spaces in the definition of $\Int(g)$. This is why we chose to give an ad hoc definition of $\Int(g)$ as a length.

There are two ingredients in the proof of the arithmetic transfer identity. The first is the formula of Gross and Keating for the deformation lengths of quasi-endomorphisms of quasi-canonical lifts, see \cite{gross}, \cite{grosskeating} and \cite{keating}. See also the account given in \cite{argos}. We need a slight extension of this formula as presented in \cite[Section 7]{kudlarapoport}. The second ingredient comes from harmonic analysis. More precisely, one has to know the existence of various test functions on the symmetric space $S(F)$. The case of general $n$ was solved by W. Zhang by reduction to the Lie algebra, see \cite{zhang3}. Here we give direct proofs in the case $n=2$.

The layout of this paper is as follows. In Section 2 we present a computation of the quantities $\Int(g)$ following \cite[Section 7]{kudlarapoport}. This determines the right hand side of \eqref{firststmt}. In Section 3 we give a complete characterization of functions of the form $γ\mapsto \Orb_γ(f)$ and $γ\mapsto \dOrb_γ(f)$. In Section 4 we prove the theorems stated above.

\subsection*{Acknowledgments}
I would like to thank M. Rapoport for suggesting this topic and for his continuing interest in this work. I also thank W. Zhang for helpful correspondence.

\section{Deformation of homomorphisms of quasi-canonical lifts}
In this section we first compute the space of homomorphisms between two quasi-canonical lifts. Then we derive an explicit formula for the length of the deformation locus of a quasi-homomorphism between quasi-canonical lifts. Our computations are essentially done in the work of S. Kudla and M. Rapoport, see \cite[Section 7]{kudlarapoport}. There is a mistake in their Lemma 7.4 which we correct in Lemma \ref{correct}. 

In this section we do not exclude the case $p = 2$. Apart from that, we use the notation from the introduction.

Let $M_s/\breve E$ be the ring class field associated to the order $\mcO_s$, with ring of integers $W_s$ and maximal ideal $\mfm_s$. If $s\geq 1
$, then its ramification index over $\breve F$ is
$$e_s:=[\mcO_E^\times:\mcO_s^\times]=\begin{cases}2q^s & \tif E/F \text{ is ramified}\\
                                                  q^s+q^{s-1} & \tif E/F \text{ is unramified}.\end{cases}$$
If $s=0$, then $M_s=\breve E$.

\subsection{Quasi-canonical lifts}
Let $X_0$ be the Lubin-Tate module associated to the series
$$[π_E]_0(t):=\begin{cases}π_Et+t^q & \tif E/F \text{ is ramified}\\
                        π_Et+t^{q^2} & \tif E/F \text{ is unramified}.\end{cases}$$
This is a formal $\mcO_E$-module over $\mcO_E$ in the sense of \cite{viehmannziegler}. But we consider it as a formal $\mcO_F$-module over $\mcO_{\breve E}$. Let $\mbX$ be its reduction modulo $π_E$, which is a formal $\mcO_F$-module of height $2$ over $\mbF$. The last condition means that the multiplication by any uniformizer of $\mcO_F$ has height $2$. The module $X_0$ will be called the canonical lift of $\mbX$.

By \cite[Theorem 1.1]{wewers}, we can fix an isomorphism $\End(\mbX)\iso \mcO_D$. The $\mcO_E$-action on $X_0$ induces an embedding $\mcO_E\hookrightarrow\mcO_D$ such that the action of $\mcO_E$ on $\Lie(\mbX)$ agrees with $\mcO_E→\mbF$. Composing this action with $σ$ yields $\overbar{\mbX}$ and $\overbar{X_0}$ which are formal $\mcO_F$-modules with $\mcO_E$-action.

Let $Π∈\mcO_D$ be the Frobenius $t\mapsto t^q$. If $E/F$ is ramified, then $Π=π_E$. If $E/F$ is unramified, then $Πa=\overbar{a}Π$ for all $a∈\mcO_E$ and $Π^2=π_E$. In either case, $Π$ uniformizes $\mcO_D$.

The following facts are proven in \cite{wewers}. Let $T$ be the $p$-adic Tate-module of the generic fiber of $X_0$. It is a free $\mcO_E$-module of rank one generated by $t$, say. Any $\mcO_F$-superlattice $T⊂S⊂T\tensor_{\mcO_E}E$ defines, after a finite extension $A/\mcO_{\breve E}$, a finite subgroup $S/T⊂X_0$. The quotient $X_0→X$ is a formal $\mcO_F$-module over $A$. We call $S$ minimal of level $s$, if $S=(\mcO_E+π_F^{-s}\mcO_F)a\cdot t$ for some $a∈\mcO_E^\times$.

Let $S$ be minimal of level $s$ with corresponding quotient $α_s:X_0→X_s'$. This quotient is defined over $W_s$. The isogeny $α_s$ reduces to $Π^s$ on the special fibre and so $X_s'$ is a deformation of $\mbX$. The endomorphisms of $X_s'$ can be described in two ways. The first is to consider $\End(X_s')$ as a subset of $\End(\mbX)=\mcO_D$ by the reduction of homomorphisms. This subset coincides with $\mcO_s⊂\mcO_E$. The second is as explained in \cite[Corollary 2.3]{wewers}. Here $\mcO_s⊂\End(X_0)=\mcO_E$ are the elements $ϕ$ such that there exists $ϕ'$ with $α_s\circ ϕ=ϕ'\circ α_s$. Again this induces an isomorphism $\mcO_s\iso \End(X_s')$.

These two actions of $\mcO_s$ on $X_s'$ coincide if $E/F$ is ramified or if $s$ is even. Otherwise they differ by the Galois conjugation of $E/F$. We define $X_s$ to be the formal $\mcO_F$-module $X_s'$ together with the first $\mcO_s$-action. This means that $a∈\mcO_s$ acts on $\Lie(X_s)$ via $\mcO_s⊂W_s$. The formal module $X_s$ is called a quasi-canonical lift of level $s$. The set of isomorphism classes of quasi-canonical lifts of level $s$ is a principal homogeneous space under the group $\mcO_E^\times/\mcO_s^\times$ via its action on the minimal lattices.

Finally, if $A/\mcO_{\breve E}$ is a finite extension and $X/A$ a deformation of $\mbX$ with $\End(X)=\mcO_s⊂\End(\mbX)$, then $W_s⊂A$ and $X$ is a quasi-canonical lift of level $s$.

\begin{prop}\label{propglobalhom}
Let $X_i$ and $Y_j$ be quasi-canonical lifts of level $i$ and $j$ defined over a finite extension $A/\mcO_{\breve E}$. Then there exists an $a∈\mcO_E^\times$ and an equality
$$\Hom_{\mcO_F}(X_i,Y_j)= Π^{|i-j|}\mcO_{\min\{i,j\}}\cdot a.$$
Here the left hand side embeds into $\End(\mbX)\iso \mcO_D$ by the reduction of homomorphisms.
\end{prop}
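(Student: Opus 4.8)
The plan is to transport the computation to the canonical lift $X_0$ and reduce it to a comparison of lattices inside $E=\End^0(X_0)$.

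First I would fix the isogenies $α_i : X_0\to X_i$ and $β_j : X_0\to Y_j$ coming from the minimal lattices $S_i=\Lambda_i a_i t$ and $S_j=\Lambda_j a_j t$ that define $X_i$ and $Y_j$, where $\Lambda_s:=\mcO_E+π_F^{-s}\mcO_F$, $a_s\in\mcO_E^\times$, and $t$ generates the Tate module $T$ of $X_0$. Composition with $β_j^{-1}$ and $α_i$ gives an injection
$$\Hom_{\mcO_F}(X_i,Y_j)\hookrightarrow\End^0(X_0)=E,\qquad f\longmapsto β_j^{-1}\circ f\circ α_i,$$
and I would show that its image is $\{φ\in E\mid φ S_i\subseteq S_j\}$. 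The key point is that $X_0$ has complex multiplication by $\mcO_E$, with $G_{\breve E}$ acting on $T$ through the Lubin--Tate character, which surjects onto $\mcO_E^\times$; hence over the given finite extension $A/\mcO_{\breve E}$ the image of $G_K$ (with $K=\Frac A$) is still open in $\mcO_E^\times$, so its $F$-linear span is all of $E$. Therefore every $G_K$-equivariant $\mcO_F$-linear endomorphism of $T\otimes_{\mcO_F}F=E\cdot t$ is multiplication by an element of $E$, and passing to Tate modules one identifies $\Hom_{\mcO_F}(X_i,Y_j)$ with the $φ$ for which $φ S_i\subseteq S_j$. For the reverse inclusion I would observe that any such $φ$ lies in $\mcO_E$ (by the lattice computation below) and hence is an endomorphism of $X_0$; then $β_j\circ φ$ kills $\ker α_i=S_i/T\subseteq S_j/T=\ker β_j$ and so descends to an $\mcO_F$-linear homomorphism $X_i\to Y_j$.

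Next I would carry out the lattice computation. As $a_i,a_j$ are units, $φ S_i\subseteq S_j$ holds exactly when $φ'\Lambda_i\subseteq\Lambda_j$ for $φ':=(a_i/a_j)φ$, so the image of the injection equals, up to multiplication by the unit $a_j/a_i$, the set $M_{ij}:=\{φ\in E\mid φ\Lambda_i\subseteq\Lambda_j\}$. One checks at once that $M_{ij}$ is a module over the order $\mcO_i+\mcO_j=\mcO_{\min\{i,j\}}$; then, writing elements of $E$ in a basis $1,ω$ with $\mcO_E=\mcO_F\oplus\mcO_Fω$ and testing $φ$ against the two $\mcO_F$-generators $π_F^{-i}$ and $ω$ of $\Lambda_i$, a short valuation computation gives
$$M_{ij}=π_F^{\max\{i-j,\,0\}}\cdot\mcO_{\min\{i,j\}}.$$
In particular $M_{ij}\subseteq\mcO_E$, as was used above.

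Finally I would push the answer back to the special fibre. Since $α_i$ and $β_j$ reduce to $\Pi^i$ and $\Pi^j$, the homomorphism $f$ corresponding to $φ$ reduces to $\Pi^j\,\overline{φ}\,\Pi^{-i}\in\End(\mbX)=\mcO_D$. Substituting the description of the image just obtained and simplifying with $\Pi^2\in π_F\mcO_F^\times$ and, in the unramified case, $\Pi a=\overline a\,\Pi$ --- here $\mcO_{\min\{i,j\}}$ is $σ$-stable, so the Galois conjugation only alters the unit --- the powers of $π_F$ and of $\Pi$ combine into $\Pi^{|i-j|}$, and one obtains
$$\Hom_{\mcO_F}(X_i,Y_j)=\Pi^{|i-j|}\cdot\mcO_{\min\{i,j\}}\cdot a$$
inside $\mcO_D$ for a suitable $a\in\mcO_E^\times$. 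The main obstacle is the identification in the first two steps: one must know both that $\mcO_F$-linear homomorphisms of these $p$-divisible groups are forced to be $E$-linear --- which rests on the surjectivity of the Lubin--Tate character on inertia, and is why $\Hom_{\mcO_F}(X_i,Y_j)$ need \emph{not} be a rank-one $\mcO_E$-module --- and that a quasi-isogeny of $X_0$ carrying $S_i$ into $S_j$ really descends to an honest homomorphism over $A$, for which the integrality $M_{ij}\subseteq\mcO_E$ is needed. The remaining arithmetic is elementary, but the bookkeeping in the last step is delicate, since the twist by $\Pi^{|i-j|}$ is invisible at the level of Tate modules and appears only upon reduction to the special fibre.
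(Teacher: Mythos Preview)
Your proposal is correct and follows essentially the same route as the paper: transport to the canonical lift via the isogenies $α_i,β_j$, compute the lattice condition $\{φ\mid φS_i\subseteq S_j\}$ explicitly as $π_F^{\max\{i-j,0\}}\mcO_{\min\{i,j\}}\cdot(a_i/a_j)$, and then push back to the special fibre using that $α_s$ reduces to $Π^s$. The only difference is that where the paper cites \cite[Corollary~2.3]{wewers} for the identification of $\Hom_{\mcO_F}(X_i,Y_j)$ with this lattice set inside $\mcO_E$, you re-derive it directly from the Lubin--Tate Galois action on the Tate module; the paper's final unit $a=σ^i(a_i/a_j)$ is precisely what your bookkeeping in the last step produces.
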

\begin{proof}
The quasi-canonical lifts $X_i$ and $Y_j$ are defined by minimal superlattices of $T$ as explained above. They take the form
$$S_i = (π^{-i}\mcO_F+\mcO_E)a_i\cdot t\ \tand\ S_j = (π^{-j}\mcO_F+\mcO_E)a_j\cdot t,$$
where $a_i,a_j ∈ \mcO_E^\times$ are chosen suitably.

Let $α_i:X_0→X_i$ and $α_j:X_0→Y_j$ be the corresponding isogenies. According to \cite[Corollary 2.3]{wewers}, we have
$$α_j^{-1}\circ \Hom_{\mcO_F}(X_i,Y_j)\circ α_i=\{x∈\mcO_E\mid xS_i⊂S_j\}.$$
With $\tilde{a}=a_i/a_j$, we get
$$\{x∈\mcO_E\mid xS_i⊂S_j\}=\begin{cases}\mcO_{\min\{i,j\}}\tilde{a} & \tif i<j\\
                                     π_F^{i-j}\mcO_{\min\{i,j\}}\tilde{a} & \tif i\geq j.\end{cases}$$
Multiplying with $Π^{-i}$ on the right and $Π^j$ on the left yields the result with $a=σ^i(\tilde a)$.
\end{proof}

\begin{cor}\label{corglobalhom}
Let $X_i$ and $Y_j$ be quasi-canonical lifts of level $i$ and $j$. Then\smallskip\\
\emph{a)} $\Hom_{\mcO_F}(X_i,Y_j)$ is a free $\mcO_{\min\{i,j\}}$-module of rank $1$ where scalar multiplication is given by composition.\smallskip\\
\emph{b)} The reduction of a homomorphism $X_i→Y_j$ commutes with the $\mcO_E$-action on $\mbX$ if and only if $E/F$ is ramified or if $i+j$ is even. Otherwise it Galois-commute with the $\mcO_E$-action.$\hfill\qed$
\end{cor}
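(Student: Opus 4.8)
The plan is to read off both statements directly from Proposition \ref{propglobalhom}, which already identifies $\Hom_{\mcO_F}(X_i,Y_j)$ with an explicit rank-one ideal inside $\mcO_D=\End(\mbX)$. By symmetry I may assume $i\le j$, so that $\min\{i,j\}=i$, $\End(X_i)\iso\mcO_i$, and Proposition \ref{propglobalhom} provides a unit $a\in\mcO_E^\times$ with $\Hom_{\mcO_F}(X_i,Y_j)=\Pi^{m}\mcO_i\cdot a$, where $m:=j-i\ge 0$ and the containment in $\mcO_D$ is via reduction of homomorphisms; the case $i>j$ is handled identically, with the roles of $X_i$ and $Y_j$ swapped and precomposition replaced by postcomposition.

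For part a), I would use that reduction of homomorphisms is compatible with composition, so that the action of $\End(X_i)$ on $\Hom_{\mcO_F}(X_i,Y_j)$ by precomposition becomes, after reduction, right multiplication by the subring $\mcO_i\subset\mcO_E\subset\mcO_D$ on $\Pi^{m}\mcO_i\cdot a$. Since $\mcO_E$ is commutative and $a$ is a unit, the map $\mcO_i\to\Pi^{m}\mcO_i\cdot a$, $b\mapsto\Pi^{m}ba$, is then an $\mcO_i$-linear bijection, which exhibits $\Hom_{\mcO_F}(X_i,Y_j)$ as free of rank one over $\mcO_i=\mcO_{\min\{i,j\}}$ with generator $\Pi^{m}a$.

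For part b), recall that the $\mcO_E$-action on $\mbX$ is the fixed embedding $\mcO_E\subset\mcO_D$. Since $E$ is a maximal subfield of $D$, its centralizer in $D$ is $E$ itself, so an element of $\mcO_D$ commutes with the $\mcO_E$-action iff it lies in $\mcO_E$; and, using the relations recorded before Proposition \ref{propglobalhom}, an element Galois-commutes with the $\mcO_E$-action iff it lies in the $\mcO_E$-line $\mcO_E\Pi$ (when $E/F$ is unramified this uses $\Pi a=\overbar a\,\Pi$; when $E/F$ is ramified, $\Pi=\pi_E\in\mcO_E$ and $\mcO_E\Pi=\mcO_E$). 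I would then split according to the parity of $m$, which coincides with the parity of $i+j$. If $m$ is even, $\Pi^{m}=\pi_E^{m/2}\in\mcO_E$, and if $m$ is odd but $E/F$ is ramified, $\Pi^{m}=\pi_E^{m}\in\mcO_E$; in both cases $\Hom_{\mcO_F}(X_i,Y_j)\subset\mcO_E$ and every reduction commutes with the $\mcO_E$-action. If $m$ is odd and $E/F$ is unramified, then $\Pi^{m}=\pi_E^{(m-1)/2}\Pi\in\mcO_E\Pi$, so $\Hom_{\mcO_F}(X_i,Y_j)\subset\mcO_E\Pi$ and every reduction Galois-commutes but, since $\mcO_E\cap\mcO_E\Pi=0$ in $D$, none commutes. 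This is precisely the claimed dichotomy.

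I do not expect a serious obstacle: all of the content already sits in Proposition \ref{propglobalhom}, and what remains is the bookkeeping of identifying the composition actions with multiplication in $\mcO_D$ via functoriality of reduction, and of locating $\Pi^{|i-j|}$ inside $D$ in each ramification case. The only mildly delicate point is noticing that in the ramified case $\Pi=\pi_E$ already lies in $\mcO_E$, which is why ramification alone forces commutation with the $\mcO_E$-action irrespective of the parity of $i+j$.
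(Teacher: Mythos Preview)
Your argument is correct and matches the paper's treatment: the corollary is marked with an immediate $\qed$, i.e.\ it is regarded as a direct consequence of Proposition~\ref{propglobalhom}, and you have simply spelled out that deduction. The only trivial caveat is that in part~b) the zero homomorphism both commutes and Galois-commutes, so the dichotomy should be read for nonzero homomorphisms; otherwise there is nothing to add.
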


Recall \cite[Proposition 4.6]{wewers}, stating that the modulus of $X_s$ uniformizes $W_s$ if $s\geq 1$. This means the following. After the choice of a formal coordinate $X_s\iso \Spf W_s[[t]]$, we can write the multiplication by $π_F$ as a power series
$$[π_F]_s(t)=π_Ft+\ldots+u_st^q+\ldots.$$
Then $u_s$ uniformizes $W_s$. In particular, $[π_F]_s$ has height $1$ modulo $\mfm_s^2$. This implies the next lemma.

\begin{lem}\label{correct}
Consider two quasi-canonical lifts $X_i$ and $Y_j$ of levels $i>j$ defined over a finite extension $A/W_i$ of ramification index $e$. Let $\mfm⊂A$ be the maximal ideal. Then any automorphism of $\mbX$ lifts to an isomorphism $X_i\tensor A/\mfm^e\iso Y_j\tensor A/\mfm^e$. But $X_i\tensor A/\mfm^{e+1}$ and $X_j\tensor A/\mfm^{e+1}$ are not isomorphic as $\mcO_F$-modules.
\end{lem}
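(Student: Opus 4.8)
The plan is to reduce both assertions to a congruence between the Lubin--Tate deformation parameters of $X_i$ and $Y_j$, and then read everything off from the valuations of these parameters. Recall that $\mbX$, viewed as a formal $\mcO_F$-module of height $2$ and dimension $1$ over $\mbF$ together with a rigidification, has deformation functor pro-represented by $R:=\mcO_{\breve F}[[u]]$, and that $\Aut(\mbX)=\mcO_D^\times$ acts on $\Spf R$ over $\Spf\mcO_{\breve F}$, fixing the closed point. Since $A$ is totally ramified over $\mcO_{\breve F}$ with residue field $\mbF$, after fixing rigidifications the lifts $X_i$ and $Y_j$ are classified by parameters $u_i,u_j\in\mfm$, and for every $k\geq 1$ and every $g\in\mcO_D^\times$ there is an isomorphism $X_i\tensor A/\mfm^k\iso Y_j\tensor A/\mfm^k$ of formal $\mcO_F$-modules inducing $g$ on special fibres if and only if $g\cdot u_i\equiv u_j\pmod{\mfm^k}$. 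Thus the lemma is equivalent to: $v_A(g\cdot u_i-u_j)=e$ for every $g\in\mcO_D^\times$, where $v_A$ is normalized by $v_A(\mfm)=1$.

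First I would compute the two valuations. By \cite[Proposition 4.6]{wewers}, recalled above, $u_i$ is a uniformizer of $W_i$; as $A/W_i$ has ramification index $e$, this gives $v_A(u_i)=e$. For $Y_j$, note that $W_j\subset W_i$ (ring class fields are nested oppositely to their orders) and that for $j\geq 1$ the extension $W_i/W_j$ is totally ramified of degree $e_i/e_j=q^{i-j}$ by the formula for $e_s$ recalled at the start of the section; moreover $u_j$ cannot be a unit of $W_j$, since otherwise $[\pi_F]$ would have height $1$ on $Y_j\tensor\mbF=\mbX$, contradicting that $\mbX$ has height $2$. Hence $v_A(u_j)\geq eq^{i-j}\geq 2e$. (When $j=0$ and $E/F$ is ramified the same argument applies with $W_0=\mcO_{\breve E}$ and gives $v_A(u_0)\geq eq^i$; when $j=0$ and $E/F$ is unramified one has outright $u_0=0$, since $[\pi_F]_0(t)\equiv\pi_F t$ modulo degree $q^2$.)

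Next I would control the $\mcO_D^\times$-action on $u_i$. An automorphism $g$ acts $\mcO_{\breve F}$-linearly on $R$ and fixes the closed point, so the induced automorphism $g^*$ of $R$ is local, fixes $\pi_F$, and preserves $\mfm_R=(\pi_F,u)$; hence the image of $u$ under $g^{\pm 1}$ is a power series $a_0+a_1u+a_2u^2+\cdots\in\mcO_{\breve F}[[u]]$ with $a_0\in\mfm_{\breve F}$ and $a_1\in\mcO_{\breve F}^\times$ (the latter because $g^*$ induces an isomorphism on $\mfm_R/\mfm_R^2$ while fixing $\pi_F$). Substituting $u=u_i$ and using $v_A(a_0)\geq v_A(\pi_F)>e$, $v_A(a_1u_i)=e$ and $v_A(a_nu_i^n)\geq 2e$ for $n\geq 2$, we get $v_A(g\cdot u_i)=e$ for every $g$. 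Combining with the previous paragraph, $v_A(g\cdot u_i-u_j)=\min(e,v_A(u_j))=e$ for all $g\in\mcO_D^\times$. This is precisely what we wanted: $g\cdot u_i\equiv u_j\equiv 0\pmod{\mfm^e}$ for all $g$, so every automorphism of $\mbX$ lifts to an isomorphism $X_i\tensor A/\mfm^e\iso Y_j\tensor A/\mfm^e$; whereas $g\cdot u_i\not\equiv u_j\pmod{\mfm^{e+1}}$ for all $g$, so $X_i\tensor A/\mfm^{e+1}$ and $Y_j\tensor A/\mfm^{e+1}$ are not isomorphic, even forgetting the $\mcO_E$-action.

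The step requiring the most care is the deformation-theoretic dictionary of the first and third paragraphs: the precise form of the $\mcO_D^\times$-action on $R$, the facts that it fixes $\pi_F$ and acts invertibly on the $u$-line of the tangent space, and the translation of isomorphisms modulo $\mfm^k$ into congruences of parameters under that action. Alternatively, and closer to \cite[Section 7]{kudlarapoport}, one can avoid representability altogether: choosing coordinates, write $[\pi_F]_{X_i}$ and $[\pi_F]_{Y_j}$ as power series over $A$ whose $t^q$-coefficients are $u_i$ and $u_j$, and for a hypothetical isomorphism $\phi$ compare the $t^q$-coefficients on the two sides of $\phi\circ[\pi_F]_{X_i}=[\pi_F]_{Y_j}\circ\phi$; the formal-group bookkeeping needed to isolate that coefficient — controlling the intermediate coefficients of $[\pi_F]$ and of $\phi$ — is then the real work, and is where the corrected form of \cite[Lemma 7.4]{kudlarapoport} comes from.
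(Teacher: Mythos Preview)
Your proof is correct and follows essentially the same approach as the paper: both reduce to the universal deformation space of $\mbX$ as a formal $\mcO_F$-module, identify the classifying parameters of $X_i$ and $Y_j$ with the moduli $u_i,u_j$, use that $u_i$ uniformizes $W_i$ (so $v_A(u_i)=e$) while $v_A(u_j)>e$, and then analyze the $\Aut(\mbX)$-action on the parameter. You supply more detail than the paper does---in particular the explicit power-series argument showing $a_1\in\mcO_{\breve F}^\times$ and hence $v_A(g\cdot u_i)=e$ for every $g$, where the paper simply asserts $(φ_i\circ α^*)(x)\equiv φ_i(x)\bmod\mfm^e$---but the strategy is identical.
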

\begin{proof}
According to \cite[Theorem 3.8]{viehmannziegler}, the universal deformation space of $\mbX$ is $\Spf \mcO_{\breve E}[[x]]$. Let $φ_i,φ_j:\mcO_{\breve E}[[x]]\righta A$ be the homomorphisms corresponding to $X_i$ and $Y_j$. According to \cite[Lemma 3.5]{viehmannziegler}, $φ_i(x)=u_i$ and $φ_j(x)=u_j$. Now $u_i\equiv u_j$ modulo $\mfm^e$, but not modulo $\mfm^{e+1}$. It follows that $X_i\tensor A/\mfm^e\iso Y_j\tensor A/\mfm^e$ and $X_i\tensor A/\mfm^{e+1}\ncong X_s\tensor A/\mfm^{e+1}$.

We still have to show that any automorphism $α$ of $\mbX$ lifts to $X_i\tensor A/\mfm^e$. But $α$ induces an automorphism $α^*$ of the universal deformation space and $(φ_i\circ α^*)(x)\equiv φ_i(x) \mod \mfm^e$.
\end{proof}

\subsection{Deformation of homomorphisms}
For the rest of this section we fix two quasi-canonical lifts $X_i$ and $Y_j$ of levels $i$ and $j$ defined over some finite extension $A/\mcO_{\breve F}$. In particular $W_i,W_j⊂A$ and we let $e$ be the ramification index of $A$ over $W_{\max\{i,j\}}$. Let $\mfm⊂A$ be the maximal ideal and set $d=|j-i|$.

We now compute the spaces $H_n:=\Hom_{\mcO_F}(X_i\tensor A/\mfm^{n+1},Y_j\tensor A/\mfm^{n+1})$. We also abbreviate $H_\infty:=\Hom_{\mcO_F}(X_i,Y_j)$. Let $V_l⊂H_0=\mcO_D$ denote the set of homomorphisms of height $\geq l$ and define $a(n)=1+q+\ldots+q^n$.

\begin{thm}\label{thm1}
Let $f_0∈(H_\infty + V_l)\setminus(H_\infty + V_{l+1})$ and define $n:=\lfloor(l+d)/2\rfloor$. Then $f_0$ lifts to $H_α$ but not to $H_{α+1}$ with $α+1=$
$$e\cdot    \begin{cases}   a(l)                   & \tif l<d                                              \vspace{0.4em}\\
                            a(n)+a(n-1)-a(d-1)     & \tif d\leq l \leq i+j-1 \tand l+d \text{ even}      \vspace{0.4em}\\
                            2a(n)-a(d-1)           & \tif d\leq l \leq i+j-1 \tand l+d \text{ odd}   \vspace{0.4em}\\
                            2a(j-1)-a(d-1) + \frac{l-(i+j-1)}{2}\cdot e_{\max\{i,j\}} & \tif i+j \leq l.  \vspace{0.4em}
\end{cases}$$
\end{thm}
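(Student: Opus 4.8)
The plan is to analyze the lifting problem for $f_0$ stage by stage along the filtration $H_\infty \subset \dots \subset H_n \subset \dots \subset H_0 = \mcO_D$. First I would invoke Corollary \ref{corglobalhom}: $H_\infty$ is a free $\mcO_{\min\{i,j\}}$-module of rank $1$, say generated by $g_0$, and $g_0$ has height exactly $d = |i-j|$ by Proposition \ref{propglobalhom} (the factor $\Pi^{|i-j|}$). Writing $f_0 = \Pi^l \cdot u + (\text{higher height})$ with $u$ a unit (its image in $\mcO_D/\Pi$ nonzero), the hypothesis $f_0 \in (H_\infty + V_l)\setminus(H_\infty + V_{l+1})$ means that modulo $H_\infty$ the element $f_0$ has ``height $l$''. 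The key structural input I would establish is a description of the reduction maps $H_{n} \to H_{n-1}$: by deformation theory (Grothendieck–Messing, or the explicit Lubin–Tate coordinates of \cite{viehmannziegler}), a homomorphism over $A/\mfm^n$ lifts to $A/\mfm^{n+1}$ iff an obstruction class in $\mfm^n/\mfm^{n+1} \otimes (\text{something})$ vanishes, and the obstruction is governed by how $f_0$ interacts with the difference of the moduli $u_i - u_j$, which vanishes to order exactly $e\cdot(\text{stuff})$ as quantified in Lemma \ref{correct}.

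**The two regimes.** I would split into the three ranges appearing in the statement. (i) For $l < d$: here $f_0$ is ``more divisible'' than any element of $H_\infty$ can detect, so the lifting is controlled purely by the height-$l$ condition on $\mbX$ itself — i.e. by the classical Lubin–Tate/Gross–Keating deformation count for an isogeny of height $l$, giving length $e\cdot a(l)$; the quasi-canonical structure plays no role yet since we never ``reach'' $H_\infty$. (ii) For $d \le l \le i+j-1$: now one writes $f_0$ relative to the generator $g_0$ of $H_\infty$; effectively $f_0 = g_0 \cdot c + (\text{error of height } l)$ where $c \in \mcO_{\min\{i,j\}}$, and the error term's lifting is a ``relative'' Gross–Keating computation over the base $\Spf W_{\min\{i,j\}}$ rather than $\Spf \mcO_{\breve F}$. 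The appearance of $n = \lfloor (l+d)/2\rfloor$ is the classical Gross–Keating phenomenon (the deformation length of height-$m$ quasi-isogeny is $a(\lfloor m/2\rfloor)$-ish), and the parity split $l+d$ even vs.\ odd reflects whether the ``middle'' of the Newton-type polygon lands on a lattice point; the correction $-a(d-1)$ subtracts off the contribution already accounted for by $g_0 \in H_\infty$. (iii) For $l \ge i+j$: once $l$ exceeds $i+j-1$ the module $\mcO_{\min\{i,j\}} = \mcO_j$ (assuming $i > j$) is ``exhausted'' — the filtration of $H_\infty/\Pi^? H_\infty$ by height has finite length $\le i+j-1$, and beyond that each further increment of $l$ by $2$ costs exactly one more ``full period'', namely $e_{\max\{i,j\}}$ (the ramification index of the ring class field $M_{\max\{i,j\}}$), since we are now genuinely deforming inside the quasi-canonical lift $X_i$ over $W_i$ and Lemma \ref{correct} tells us the modulus generates $\mfm_i$. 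This gives the linear-in-$l$ formula $2a(j-1) - a(d-1) + \tfrac{l-(i+j-1)}{2} e_{\max\{i,j\}}$.

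**Executing the induction.** Concretely I would proceed by induction on $l$ (or equivalently track the ``critical index'' $\alpha$). At each level $n$, either $f_0$ already lifts — in which case I push to $H_{n+1}$ and the obstruction is the next coefficient — or it does not, and I must show the failure occurs at precisely the claimed $\alpha+1$. The bookkeeping device is to compare, inside $A[[t]]$, the power series $[\pi_F]_{X_i}$ and $[\pi_F]_{Y_j}$ after transporting by the candidate lift of $f_0$; the discrepancy is divisible by $\mfm^{e\cdot(\dots)}$ by the uniformizing property of the moduli (\cite[Proposition 4.6]{wewers}) combined with the height-$l$ hypothesis, and one checks it is no more divisible. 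This is essentially the computation of \cite[Section 7]{kudlarapoport}, with their Lemma 7.4 replaced by the corrected Lemma \ref{correct} above, so I would largely cite and adapt, being careful that the ramification index $e$ is now taken over $W_{\max\{i,j\}}$ rather than over $\mcO_{\breve F}$ as in their setup.

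**Main obstacle.** The hardest part will be regime (ii), specifically getting the parity dichotomy and the additive correction $-a(d-1)$ exactly right. The subtlety is that the ``height'' filtration on $H_\infty + V_\bullet$ is not the same as the $\Pi$-adic filtration on $\mcO_D$, because $H_\infty = \Pi^d \mcO_{\min\{i,j\}} a$ is a module over the \emph{non-maximal} order $\mcO_{\min\{i,j\}} = \mcO_F + \pi_F^{\min\{i,j\}}\mcO_E$, and its successive height layers have sizes governed by the order's conductor; I will need to carefully match the combinatorics of $a(n) + a(n-1)$ versus $2a(n)$ to whether the integer $l+d$ is even (so that $l - d = 2(n-d)$ is even and the relevant sublattice index is a perfect ``square'' in the relevant sense) or odd. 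I expect this to require a small explicit lemma on lengths of $\mcO_s$-modules of the form $\mcO_s/\Pi^k\mcO_s \cap (\text{something})$, after which the four cases fall out by direct substitution and the verification that the formula is continuous across the boundaries $l = d$ and $l = i+j-1$ (which serves as a useful consistency check on the constants).
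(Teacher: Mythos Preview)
Your outline is plausible in spirit but misses the three structural reductions that make the paper's proof short, and as a result you end up worrying about a difficulty that does not actually arise.

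First, the paper immediately reduces to $i\leq j$ by dualizing: $X_i^\vee$ and $Y_j^\vee$ are again quasi-canonical lifts of the same levels, and dualization gives a height-preserving bijection $\Hom(X_i,Y_j)\cong\Hom(Y_j^\vee,X_i^\vee)$ compatible with all the $H_n$. You do not mention this; without it the case split becomes asymmetric and awkward.

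Second, and more importantly, the paper does \emph{not} track $f_0$ relative to a generator of $H_\infty$ or analyze any $\mcO_{\min\{i,j\}}$-module combinatorics. Instead it writes $f_0=h+g_0$ with $h\in H_\infty$ and $g_0\in V_l$, observes that adding $h$ is a bijection on deformations, and so replaces $f_0$ by $g_0$ of \emph{exact} height $l$. This single subtraction dissolves what you flagged as the ``main obstacle'': there is no need for a lemma on lengths of $\mcO_s$-modules, and the parity dichotomy in regime (ii) is inherited verbatim from the known endomorphism case rather than produced from scratch.

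Third, the actual computation is not a direct Grothendieck--Messing obstruction analysis. The paper introduces a chain of intermediate quasi-canonical lifts $Z_k$ for $i\leq k\leq j$ (defined by nested lattices) and uses the stepping lemma \cite[Lemma~3.6]{rapoport}: if $f_0\notin\Hom(X_i,Z_k)$ then $n_{k+1}(\Pi f_0)=n_k(f_0)+e/e_{k+1}$. For $l<d$ one writes $f_0=\Pi^l g_0$ with $g_0$ a unit, invokes Lemma~\ref{correct} for the base, and steps $l$ times. For $l\geq d$ one writes $f_0=\Pi^d g_0$ with $g_0\in(\End(X_i)+V_{l-d})\setminus(\End(X_i)+V_{l-d+1})$, cites \cite[Theorem~2.1]{vollaard} for the endomorphism case $n_i(g_0)$ (this is where the three subcases and the parity of $l-d$ come from, already packaged), and steps $d$ times via the lemma. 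The arithmetic $e_j/e_i=q^d$ then matches the formulas.

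So your proposal is not wrong, but it is the hard way around: you are proposing to redo from first principles what the paper obtains by citing Vollaard's endomorphism formula and Rapoport's stepping lemma, connected by the trivial observation that one may subtract off $H_\infty$.
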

\begin{rmk}
The fraction appearing in the theorem is always an integer. Namely $e_{\max\{i,j\}}$ is even except for the case $i=j=0$ and $E/F$ unramified. But if $E/F$ is unramified, then the integer $l-(i+j-1)$ in the last case is always even. 
\end{rmk}
\begin{proof}
First we reduce to the case $i\leq j$. The formal group $\mbX$ and the quasi-canonical lifts are also $p$-divisible groups. It is well known that the dual $p$-divisible $\mcO_F$-module $\mbX^\vee$ is isomorphic to $\mbX$. (Here $\mbX^\vee = \Hom(\mbX,\mbL)$ where $\mbL$ is a Lubin-Tate module for $F$.) It follows that $X_i^\vee$ and $Y_j^\vee$ are formal groups deforming $\mbX$ with endomorphism ring equal to $\mcO_i$ and $\mcO_j$. In particular, $X_i^\vee$ and $X_j^\vee$ are again quasi-canonical lifts of level $i$ and $j$.

Now dualizing yields a bijection of $\mcO_F$-modules,
$$\Hom(X_i\tensor W/\mfm^{n+1},Y_j\tensor W/\mfm^{n+1})\iso\Hom(Y_j^\vee\tensor W/\mfm^{n+1},X_i^\vee\tensor W/\mfm^{n+1}).$$
Its inverse is also given by dualization. This bijection commutes with the reduction of morphisms and so preserves the deformation lengths of homomorphisms. It also preserves the height and, in particular, preserves the spaces $(H_\infty+V_l)\setminus(H_\infty+V_{l+1})$. Here we defined $H_n,H_\infty$ and $V_l$ in the obvious way for the right hand side. So we can assume that $i\leq j$ from now on.

We can write $f_0=h+g_0$ with $h∈H_\infty$ and $g_0∈V_l$. It is clear that the deformations of $f_0$ and $g_0$ are in bijection via addition or subtraction of $h$.

So we can assume that $f_0$ has height $l$. Let $X_i$ be defined by the lattice $(\mcO_E+π^{-i}\mcO_F)a_i\cdot t$ and define $Y'_j$ by the lattice $(\mcO_E+π^{-j}\mcO_F)a_i\cdot t$. By Proposition \ref{propglobalhom}, there exists an $a∈\mcO_E^\times$ which lifts to an isomorphism $Y_j\iso Y'_j$. Then left multiplication with $a$ induces bijections $H_\infty\iso\Hom(X_i,Y'_j)$ and $H_n\iso \Hom_{\mcO_F}(X_i\tensor A/\mfm^{n+1},Y'_j\tensor A/\mfm^{n+1})$ and preserves the height. So it is enough to prove the theorem for $Y_j=Y'_j$.

For $i\leq k\leq j$, we define $Z_k$ to be the quasi-canonical lift associated to $(\mcO_E+π^{-k}\mcO_F)a_i\cdot t$. For any $g_0∈\mcO_D$ we define $n_{k}(g_0)$ to be the maximal $n$ (or $\infty$) such that $g_0$ lifts to a homomorphism $X_i\tensor A/\mfm^n→Z_k\tensor A/\mfm^n$. We recall \cite[Lemma 3.6]{rapoport}:

\begin{lem} \label{rapoport}
Suppose that $f_0∈\mcO_D\setminus \Hom_{\mcO_F}(X_i,Z_k)$. Then
$$n_{k+1}(Πf_0)=n_k(f_0)+e/e_{k+1}.\vspace{-7mm} $$
$\hfill\qed$
\end{lem}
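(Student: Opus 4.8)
The plan is to compare the two deformation problems —- lifting $\Pi f_0$ against $Z_{k+1}$ and lifting $f_0$ against $Z_k$ —- by means of the canonical degree-$q$ isogeny connecting consecutive quasi-canonical lifts, and then to extract the precise discrepancy $e/e_{k+1}$ from the description of these lifts as points of the one-dimensional universal deformation space of $\mbX$.

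First I would set up the transition isogeny. Since $Z_k$ and $Z_{k+1}$ are the quasi-canonical lifts attached to the minimal lattices $S_k\subset S_{k+1}$ of index $q$, the induced quotient $\rho\colon Z_k\to Z_{k+1}$ is an isogeny of degree $q$ defined already over $W_{k+1}\subseteq A$; from the compatibility of the isogenies $X_0\to Z_k\to Z_{k+1}$ with their reductions $\Pi^k,\Pi^{k+1}$ one gets that $\rho$ reduces to $\Pi$ on special fibres. As $\ker\rho$ is annihilated by $\pi_F$, there is also a complementary isogeny $\rho^\dagger\colon Z_{k+1}\to Z_k$ over $W_{k+1}$ with $\rho^\dagger\circ\rho=[\pi_F]_{Z_k}$, $\rho\circ\rho^\dagger=[\pi_F]_{Z_{k+1}}$, and $\rho^\dagger$ reducing to $\Pi$ up to a unit. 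Composing a lift of $f_0$ with $\rho$ gives at once $n_{k+1}(\Pi f_0)\ge n_k(f_0)$; moreover $\Pi f_0\notin\Hom_{\mcO_F}(X_i,Z_{k+1})$, so $n_{k+1}(\Pi f_0)<\infty$, because in the chain $\Hom_{\mcO_F}(X_i,Z_{k+1})=\Pi\cdot\Hom_{\mcO_F}(X_i,Z_k)$ by Proposition \ref{propglobalhom} and $\Pi$ is injective, so the hypothesis $f_0\notin\Hom_{\mcO_F}(X_i,Z_k)$ passes up. Applying $\rho^\dagger$ to a lift of $\Pi f_0$ gives in addition $n_{k+1}(\Pi f_0)\le n_k([\pi_F]f_0)$; it is this two-sided estimate that has to be sharpened.

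For the exact increment I would pass to Grothendieck--Messing deformation theory over the universal deformation $\Spf\mcO_{\breve E}[[x]]$ of $\mbX$ (\cite[Theorem 3.8]{viehmannziegler}), in which $X_i,Z_k,Z_{k+1}$ are the fibres over the parameters $x=u_i,u_k,u_{k+1}$, and $u_s$ uniformizes $W_s$ by \cite[Proposition 4.6]{wewers}. A homomorphism lifts to level $n+1$ precisely when the endomorphism it induces on $\mathbb D(\mbX)$ carries the Hodge filtration of one deformation into that of the other modulo $\mfm^{n+1}$; hence $n_k(f_0)$ equals the $\mfm$-adic valuation of an obstruction element, a power series in $(u_i,u_k)$ whose coefficients are built from $f_0$ acting on $\mathbb D(\mbX)$ and from the universal filtration, and likewise $n_{k+1}(\Pi f_0)$ from the obstruction series in $(u_i,u_{k+1})$ attached to $\Pi f_0$. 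The point is that composing with $\Pi$ on $\mathbb D(\mbX)$, i.e.\ with the $q$-Frobenius, raises the height by one and introduces a $q$-power twist in the filtration variable; this twist is exactly compensated by the ramification of $W_{k+1}/W_k$, namely $v_{\mfm}(u_k)=q\,v_{\mfm}(u_{k+1})$, so that a term-by-term comparison of the two obstruction series yields
\[
n_{k+1}(\Pi f_0)=n_k(f_0)+v_{\mfm}(u_{k+1}),
\]
and $v_{\mfm}(u_{k+1})=e/e_{k+1}$ is the ramification index of $A$ over $W_{k+1}$. Equivalently, after writing $f_0=\Pi^l w$ with $w\in\mcO_D^\times$, the claim reduces to tracking how far a fixed power of $\Pi$ propagates up the tower of quasi-canonical lifts, which is the Gross--Keating computation and runs in parallel to the proof of Lemma \ref{correct}.

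The main obstacle is this last comparison: one must make the interaction of the $q$-Frobenius with the universal Hodge filtration explicit enough to see that feeding in the finer parameter $u_{k+1}$ instead of $u_k$ contributes one, and only one, uniformizer of $W_{k+1}$ —- uniformly in $f_0$ and with no exception when $p=2$. The decisive input here is the precise shape of $[\pi_F]_s$ modulo $\mfm_s^2$ (it has height one, by \cite[Proposition 4.6]{wewers}), which is what prevents the increment from being either larger or smaller. This is exactly the content of \cite[Lemma 3.6]{rapoport}, and the argument sketched above is a reconstruction of its proof.
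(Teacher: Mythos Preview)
The paper gives no proof of this lemma: it is introduced with ``We recall \cite[Lemma 3.6]{rapoport}'' and closed immediately with a \textsc{qed} box, so there is nothing in the paper to compare your argument against beyond the bare citation.

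Your first paragraph is correct. The degree-$q$ isogeny $\rho\colon Z_k\to Z_{k+1}$ exists, reduces to $\Pi$, and together with the dual $\rho^\dagger$ yields the sandwich $n_k(f_0)\le n_{k+1}(\Pi f_0)\le n_k(\pi_F f_0)$; you also correctly note that $\Pi f_0\notin\Hom_{\mcO_F}(X_i,Z_{k+1})$. Your reading of $e/e_{k+1}$ as the ramification index of $A$ over $W_{k+1}$ is the right one and silently resolves a notational overload in the paper: the $e$ appearing in the quoted lemma is Rapoport's $e(A/\mcO_{\breve F})$, not the $e=e(A/W_{\max\{i,j\}})$ fixed a few lines earlier in Section~2.2 (one checks this by running the arithmetic in the case $l<d$ immediately below the lemma).

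The gap is in your second paragraph. You yourself say the sandwich ``has to be sharpened'', but what follows is description rather than argument: the phrases ``introduces a $q$-power twist in the filtration variable'', ``this twist is exactly compensated by the ramification'', and ``a term-by-term comparison of the two obstruction series yields'' name a hoped-for mechanism without carrying it out. To make the Grothendieck--Messing route genuine you would have to write the Hodge lines of $Z_k$ and $Z_{k+1}$ explicitly in $\mbD(\mbX)\otimes A/\mfm^n$ in terms of $u_k$ and $u_{k+1}$, determine how left multiplication by $\Pi$ (which is Frobenius and hence not $A$-linear on the crystal) moves one line relative to the other, and then actually compare valuations; none of this is done here. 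As written, your sketch isolates the right objects and the right constant but does not close the key step, so it remains a plausible outline rather than a proof. For the actual argument you should consult \cite[Lemma~3.6]{rapoport} directly.
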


\emph{Case $l<d$:} Let us assume that $l<d$. Recall from Lemma \ref{correct} that if $l=0$ and $i\neq j$, then $n_j(f_0)=e$. If $l\neq 0$, we can write $f_0=Π^lg_0$ with $g_0$ of height $0$. Then $n_{j-l}(g_0)=e\cdot e_j/e_{j-l} = e\cdot q^l$ and an inductive application of the previous lemma shows
$$n_j(f_0)=e\cdot(q^l+q^{l-1}+\ldots+1),$$
which proves Theorem \ref{thm1} in the first case.

\emph{Remaining cases:} Now assume that $l\geq d$ and write $f_0=Π^dg_0$. Then $$g_0∈(\End(X_i)+V_{l-d})\setminus(\End(X_i)+V_{l-d+1})$$ and by \cite[Theorem 2.1]{vollaard}, we have
$$n_{i}(g_0)=e\cdot e_j/e_i\begin{cases}a((l-d)/2)+a((l-d)/2-1) & \tif l-d < 2i \text{ is even}\\
                               2a((l-d)/2)         & \tif l-d < 2i \text{ is odd}\\
                               2a(i-1)+\frac{1}{2}(l-(i+j-1))e_i & \tif l-d \geq 2i.\end{cases}$$
Again we apply Lemma \ref{rapoport} $d$ times, distinguishing two cases. If $i=0$, then only the fourth case of Theorem \ref{thm1} and the third of the above cases occurs. The result is immediate. If $i>0$, then one uses $e_j=q^de_i$ to verify the formula.
\end{proof}

\section{Analytic theory}
In this section, we prove some general results about orbital integrals on $S(F)$. Our main results are Corollary \ref{charactorbints} and Corollary \ref{charactderivorbints} which characterize the functions of the form $γ\mapsto \Orb_γ(f)$ and $γ\mapsto \dOrb_γ(f)$. Another important result is Theorem \ref{existtransfer} which plays a key role in the proof of the arithmetic transfer identity. Theorem \ref{existtransfer} has been proven by W. Zhang for any $n$, see \cite[Theorem 2.6]{zhang3}.

We take up all notation from the introduction.

\subsection{Orbital integrals on $S(F)$}\label{orbints}
First note that an element
$$γ:=\left(\begin{array}{cc}
a & b\\
c & d
\end{array}\right)∈GL_2(E)$$
is regular semi-simple if and only if $b\neq 0$ and $c\neq 0$. We denote its entries by $a(γ),b(γ),c(γ)$ and $d(γ)$.

Parametrizations of $S(F)_{\rm{rs}}$ and of $B:=S(F)\setminus S(F)_{\rm{rs}}$ are given as follows:
\begin{equation}\begin{aligned}\label{SF}
S(F)_{\rm{rs}}= & \left\{\left(\begin{array}{cc}
a & b\\
(1-N(a))/\overbar{b} & -\overbar {a}b/{\overbar b}
\end{array}\right)\ \text{with}\ \ b\neq 0\ \text{and}\ 1-N(a)\neq 0\right\},\\
B= & \left\{\left.\left(\begin{array}{cc} a & 0 \\ c & d\end{array}\right) \ \right |\ a,d∈U(1)(F) \tand c\overbar{a}+d\overbar{c}=0\right\}\\
\cup & \left\{\left.\left(\begin{array}{cc} a & b \\ 0 & d\end{array}\right) \ \right |\ a,d∈U(1)(F) \tand a\overbar{b}+b\overbar{d}=0\right\}.
\end{aligned}\end{equation}
Denote by $B_0⊂B$ the diagonal matrices, which we identify as $B_0\iso U(1)(F)\times U(1)(F)$ by $γ\mapsto (a(γ),d(γ))$.

\begin{rmk}\label{rmkB}
In general it is easy to understand locally constant functions on $S(F)_{\rm{rs}}$ and their orbital integrals. The subtle point is to ensure that certain functions extend as locally constant functions to all of $S(F)$. The set $B_0$ will play a crucial role in this context. Namely any orbit passing close to $γ∈B$ will also pass close to $B_0$. So any $F^\times$-invariant function $f$ on $S(F)_{\rm{rs}}$ is determined in a neighborhood of $B$ by its behavior near $B_0$.
\end{rmk}

\begin{rmk}\label{rmkTop}
Any function $f∈C^\infty_c(S(F))$ has an extension $\tilde{f} ∈C^\infty_c(GL_2(E))$. It is also possible to define $\Orb_γ(\tilde f)$ for any regular semi-simple $γ∈GL_2(E)$. So we will often formulate topological statements for the group $GL_2(E)$, leaving the restriction to $S(F)$ implicit.
\end{rmk}

The integral $\Orb_γ(f,s)$ transforms by the character $η_s(\cdot):=η(\cdot)|\cdot|^s$ under the action of $F^\times$. More precisely,
$$\Orb_γ(λ^*f,s)=η_s^{-1}(λ)\Orb_γ(f,s)=\Orb_{λ^{-1}γλ}(f,s).$$
Differentiating yields
\begin{equation}\label{invariance}\dOrb_γ(λ^*f)=η(λ)\left[\dOrb_γ(f)-\log|λ|\Orb_γ(f)\right]=\dOrb_{λ^{-1}γλ}(f).\end{equation}
If $χ$ is any character of $F^\times$, then we call a function $\phi$ on $S(F)_{\rm{rs}}$ to be $χ$-invariant if $\phi(λ^{-1}γλ)=χ(λ)\phi(γ)$ for all $λ∈F^\times,γ∈S(F)_{\rm{rs}}$. For example, $γ\mapsto \Orb_γ(f,s)$ is $η_s^{-1}$-invariant for any test function $f∈C^\infty_c(S(F))$.

\begin{lem}\label{basicsonorbitalintegrals}
Let $f∈C^\infty_c(S(F))$.\smallskip\\
\emph{a)} $γ\mapsto \dOrb_γ(f)$ is η-invariant if and only if all orbital integrals $\Orb_γ(f)$ vanish.\smallskip\\
\emph{b)} There exists $f'∈C^\infty_c(S(F))$ such that $\Orb_γ(f)=\dOrb_γ(f')$ for all $γ∈S(F)_{\rm{rs}}$.
\end{lem}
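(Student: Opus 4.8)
The plan is to derive both parts directly from the transformation rule \eqref{invariance}; the only additional ingredient is the behaviour of $\Orb_\gamma(\cdot,s)$ under conjugation by $\pi_F\in F^\times$.

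\emph{Part a).} I would unwind the definition against \eqref{invariance}. Saying that $\gamma\mapsto\dOrb_\gamma(f)$ is $\eta$-invariant means $\dOrb_{\lambda^{-1}\gamma\lambda}(f)=\eta(\lambda)\dOrb_\gamma(f)$ for all $\lambda\in F^\times$ and all regular semi-simple $\gamma$; comparing with \eqref{invariance}, which reads $\dOrb_{\lambda^{-1}\gamma\lambda}(f)=\eta(\lambda)\bigl[\dOrb_\gamma(f)-\log|\lambda|\,\Orb_\gamma(f)\bigr]$, this holds if and only if $\log|\lambda|\,\Orb_\gamma(f)=0$ for all $\lambda$ and $\gamma$. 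Evaluating at $\lambda=\pi_F$, where $\log|\lambda|=-\log q\neq0$, this is equivalent to the vanishing of every $\Orb_\gamma(f)$, and the reverse implication is immediate. So part a) is essentially a one-line manipulation.

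\emph{Part b).} The idea is to find $f'$ so that the family $s\mapsto\Orb_\gamma(f',s)$ vanishes at $s=0$ with first Taylor coefficient $\Orb_\gamma(f)$. Starting from $\Orb_\gamma(\lambda^{*}f,s)=\eta(\lambda)^{-1}|\lambda|^{-s}\Orb_\gamma(f,s)$ and using that $\eta(\pi_F)$ is a nonzero scalar with $|\pi_F|=q^{-1}$, I get
$$\Orb_\gamma\bigl(\eta(\pi_F)\,\pi_F^{*}f,\ s\bigr)=\eta(\pi_F)\,\eta(\pi_F)^{-1}\,|\pi_F|^{-s}\,\Orb_\gamma(f,s)=q^{s}\,\Orb_\gamma(f,s).$$
I then set
$$f':=\frac{1}{\log q}\bigl(\eta(\pi_F)\,\pi_F^{*}f-f\bigr),$$
which lies in $C^\infty_c(S(F))$ since $S(F)$ is stable under conjugation by $F^\times$. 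For this $f'$ one has $\Orb_\gamma(f',s)=\frac{q^{s}-1}{\log q}\,\Orb_\gamma(f,s)$; evaluating at $s=0$ gives $\Orb_\gamma(f')=0$, and differentiating at $s=0$, using $\frac{d}{ds}\big|_{s=0}(q^{s}-1)=\log q$, gives $\dOrb_\gamma(f')=\Orb_\gamma(f)$, as required.

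I do not foresee any real obstacle: part a) is immediate from \eqref{invariance}, and part b) only requires spotting the right twist by $F^\times$ — $\pi_F$ works precisely because $|\pi_F|\neq1$ produces the nonzero factor $\log q$ — after which the remaining computation is elementary.
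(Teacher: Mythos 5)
Your proof is correct and follows essentially the same route as the paper: part a) is read off from the transformation rule \eqref{invariance}, and for part b) your function $f'=\frac{1}{\log q}\bigl(\eta(\pi_F)\,\pi_F^{*}f-f\bigr)$ is exactly the paper's choice $(f-\eta(\lambda)\lambda^{*}f)/\log|\lambda|$ specialized to $\lambda=\pi_F$. The only cosmetic difference is that you verify it via the factor $q^{s}$ in the family $\Orb_\gamma(\cdot,s)$ rather than by differentiating \eqref{invariance} directly.
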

\begin{proof}
The first assertion follows directly from the transformation behavior of $\dOrb$ in \eqref{invariance}. For the second assertion we compute
$$\dOrb_γ(η(λ)f-λ^*f)=η(λ)\log|λ|\Orb_γ(f).$$
Then we can choose $λ∈F^\times$ with $|λ|\neq 1$ and define $f':=(f-η(λ)λ^*f)/(\log|λ|)$.
\end{proof}

From now on, we fix an extension of $η$ to a smooth character on $E^\times$ (not necessarily quadratic). It will again be denoted by $η$. This defines an extension of $η_s$, again denoted by $η_s$, to $E^\times$. Namely we set $η_s=η|\cdot|^s$ where $|\cdot|$ denotes the extension of the absolute value of $F$ to $E$.

In light of Remark \ref{rmkB}, the following proposition is very useful.
\begin{prop}\label{propfunctions}
Given $f∈C^\infty_c(S(F))$, there exists $f'∈C^\infty_c(S(F))$ such that for all $γ∈S(F)_{\rm{rs}}$,
\begin{align*}\Orb_γ(f) & =\Orb_γ(f'),\\
\dOrb_γ(f) & =\dOrb_γ(f')\end{align*}
and $f'|_{B_0}=0$.
\end{prop}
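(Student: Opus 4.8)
The plan is to modify $f$ near the diagonal set $B_0 \cong U(1)(F) \times U(1)(F)$ by subtracting off a function supported in a small neighborhood of $B_0$ that has the same orbital integrals and derivative orbital integrals as $f$ for all regular semisimple $\gamma$. The key observation (Remark \ref{rmkB}) is that any regular semisimple orbit passing close to $B$ passes close to $B_0$, so the relevant behavior is governed by the restriction $f|_{B_0}$ together with derivatives in the transverse directions. Concretely, I would first choose a small compact open neighborhood $\mcU$ of $B_0$ in $S(F)$ on which $f$ is constant along $B_0$ in a suitable sense, using that $f$ is locally constant. The group $F^\times$ acts on $S(F)_{\rm rs}$ and, via the parametrization \eqref{SF}, one sees that near a point $\diag(a,d) \in B_0$ the regular semisimple locus looks like $\diag(a,d)$ perturbed in the $b$- and $c$-coordinates, with the $F^\times$-action scaling $b/c$ (or $bc$, depending on the coordinate). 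So one can build an auxiliary function $f_0 \in C^\infty_c(S(F))$, supported in $\mcU$, whose restriction to $B_0$ agrees with $f|_{B_0}$ and which is chosen so that for every $\gamma \in S(F)_{\rm rs}$ one has $\Orb_\gamma(f_0) = \Orb_\gamma(f)$ and $\dOrb_\gamma(f_0) = \dOrb_\gamma(f)$; then $f' := f - f_0$ does the job.

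The main technical point is controlling \emph{both} $\Orb_\gamma$ and $\dOrb_\gamma$ simultaneously while also prescribing $f_0|_{B_0}$. For this I would proceed in two steps. First, handle the orbital integral: since $\gamma \mapsto \Orb_\gamma(f)$ is $\eta_s^{-1}$-invariant at $s=0$, i.e. $\eta^{-1}$-invariant, and since near $B_0$ the orbits are essentially the $F^\times$-orbits of small perturbations of diagonal matrices, the value $\Orb_\gamma(f)$ for $\gamma$ near $B_0$ is computed by integrating $f$ along a compact orbit close to a fiber of the map $\mcU \to B_0$. One arranges $f_0$ so that its integral along each such orbit matches that of $f$; this is possible because one has complete freedom in the transverse direction. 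Second, match the derivative: by Lemma \ref{basicsonorbitalintegrals}(a), once the orbital integrals of $f - f_0$ vanish near $B_0$, the derivative $\dOrb_\gamma(f - f_0)$ is $\eta$-invariant there, and one can kill it by a further correction supported near $B_0$ that has vanishing orbital integrals — exactly the kind of function produced in the proof of Lemma \ref{basicsonorbitalintegrals}(b), namely combinations $\eta(\lambda)g - \lambda^* g$. The $\eta$-invariance is what makes the derivative a "one-parameter" quantity along each orbit, so prescribing it amounts to prescribing one number per orbit near $B_0$, which can again be absorbed into the transverse degrees of freedom of $f_0$ without disturbing $f_0|_{B_0}$ (since the correcting functions $\eta(\lambda)g - \lambda^* g$ have vanishing restriction to $B_0$ whenever $g$ does, as $\lambda$ fixes $B_0$ pointwise).

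The hard part will be the bookkeeping of neighborhoods: one must choose $\mcU$ small enough that $f$ is locally constant on it and that the "orbit close to $B_0$" picture is literally correct — i.e. that the partition of $\mcU \cap S(F)_{\rm rs}$ into pieces of $F^\times$-orbits is finite and compatible with the parametrization \eqref{SF} — and simultaneously large enough that every regular semisimple orbit that meets a neighborhood of $B$ at all is captured. Since $f$ has compact support, only finitely many $B_0$-components of $U(1)(F) \times U(1)(F)$ are relevant (each $U(1)(F)$ is compact, so in fact this is automatic), and the locally constant structure reduces everything to a finite combinatorial problem; the verification that the constructed $f_0$ is genuinely smooth of compact support on all of $S(F)$ (not just on $S(F)_{\rm rs}$) is where Remark \ref{rmkB} is used in an essential way, since extension across $B \setminus B_0$ has to be checked. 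I expect this gluing/extension verification, rather than the construction near $B_0$ itself, to be the genuine obstacle, and it is presumably handled by the same mechanism that underlies the parametrization \eqref{SF} together with an explicit choice of coordinates on $\mcU$.
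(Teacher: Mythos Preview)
Your proposal has the target backwards. You want $f_0$ with $\Orb_\gamma(f_0)=\Orb_\gamma(f)$ and $\dOrb_\gamma(f_0)=\dOrb_\gamma(f)$ so that $f':=f-f_0$ ``does the job''; but then $\Orb_\gamma(f')=0$, whereas the proposition requires $\Orb_\gamma(f')=\Orb_\gamma(f)$. What you actually need is a function $f_0$ with $\Orb_\gamma(f_0)=\dOrb_\gamma(f_0)=0$ for all $\gamma$ and $f_0|_{B_0}=f|_{B_0}$. Your second paragraph (``once the orbital integrals of $f-f_0$ vanish near $B_0$\ldots'') is consistent with this reversed goal, so the confusion is systematic rather than a typo.

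Once the goal is stated correctly, the construction is far simpler than your outline suggests and needs none of the orbit-by-orbit matching, ``transverse freedom'', or extension across $B\setminus B_0$ that you anticipate. The paper's argument is a two-line averaging trick. Take any $g\in C^\infty_c(S(F))$ with the prescribed restriction to $B_0$ --- concretely, a characteristic function of a box
\[
K(V_a,V_d)=\left\{\left(\begin{smallmatrix}V_a & \mcO_E\\ \mcO_E & V_d\end{smallmatrix}\right)\right\}\cap S(F).
\]
For $\lambda_0\in F^\times$ with $\eta(\lambda_0)=-1$, the transformation law gives $\Orb_\gamma(g+\lambda_0^*g)=(1+\eta(\lambda_0)^{-1})\Orb_\gamma(g)=0$ identically, while conjugation by $\lambda_0$ fixes $B_0$ pointwise, so $(g+\lambda_0^*g)|_{B_0}=2g|_{B_0}$. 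Applying the same trick a second time with another $\lambda_1$ (and using \eqref{invariance} together with the vanishing of $\Orb$ just obtained) kills $\dOrb$ as well and again doubles the restriction to $B_0$. Dividing by $4$ and summing over a finite partition of $B_0$ on which $f$ is constant gives $f_0$. You were in fact one step away from this: the combinations $\eta(\lambda)g-\lambda^*g$ you mention in your second step already have vanishing $\Orb$ and \emph{nonzero} restriction to $B_0$ (equal to $(\eta(\lambda)-1)g|_{B_0}$), so the same mechanism handles both $\Orb$ and $\dOrb$ directly, with no separate ``first step'' needed.
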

\begin{proof}
By definition $f|_{B_0}$ is locally constant. Let
$$\mcO_E^\times \times \mcO_E^\times = \coprod V_a^i\times V_d^i$$
be a finite disjoint open covering such that $f|_{(V_a^i\times V_d^i)\cap B_0} \equiv r_i$ is constant.

For two open compact subsets $V_a,V_d⊂E$, let $1(V_a,V_d)$ be the characteristic function of the set
$$ K(V_a,V_d):=
\left\{\left(\begin{array}{cc}
V_a & \mcO_E\\
\mcO_E & V_d
\end{array}\right)\right\}\cap S(F).$$
Note that this is a compact open subset of $S(F)$ and hence $1(V_a,V_d)∈C^\infty_c(S(F))$.

We choose $λ_0∈F^\times$ with $η(λ_0)=-1$ and $v_F(λ_0)\geq 1$ to define
$$α'(V_a,V_d):=1(V_a,V_d)+λ_0^*1(V_a,V_d).$$
By the $η$-invariance of $\Orb_γ(f)$, we see that $\Orb_γ(α'(V_a,V_d))=0$ for all $γ∈S(F)_{\rm{rs}}$.

Again we choose $λ_1∈F^\times$ with $η(λ_1)=-1$ and $v_F(λ_1)\geq 1$ to define
$$α(V_a,V_d):=\frac{1}{4}\Big(α'(V_a,V_d)+λ_1^*α'(V_a,V_d)\Big).$$
Then by Lemma \ref{basicsonorbitalintegrals} a), $\Orb_γ(α(V_a,V_d))=\dOrb_γ(α(V_a,V_d))=0$ for all $γ∈S(F)_{\rm{rs}}$.

It follows that
$$f':=f-\sum_i r_iα(V_a^i, V_d^i)$$
satisfies the conditions of the proposition.
\end{proof}

\subsection{Germ expansion of orbital integrals}\label{germs}

In the following, $X$ denotes the space $\mcO_E^\times\times\mcO_E^\times\times\Big(E^\times/F^\times\Big)$. It contains a parameter space for the $F^\times$-orbits of each of the two components of $B$ in \eqref{SF}.

\begin{thm}\label{thmorbints}
Let $f∈C^\infty_c(S(F))$ such that $f|_{B_0}=0$. Then there exist two locally constant functions
$$A_0,A_1:X→\mbC[q^s,q^{-s}]$$
such that for all $γ=\left(\begin{smallmatrix} a & b\\ c& d\end{smallmatrix}\right)∈S(F)_{\rm{rs}}$ near $B_0$,
\begin{equation}\label{germexpansion}
\Orb_γ(f,s)=η_s(b)A_0(s;a,d,b)+η_s(c)^{-1}A_1(s;a,d,c).
\end{equation}

Conversely, given locally constant $A_0,A_1$ as above, there exists an $f∈C^\infty_c(S(F))$ such that identity \eqref{germexpansion} holds.
\end{thm}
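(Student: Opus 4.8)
The plan is to analyze orbital integrals near the two components of $B$ separately, using explicit coordinates on $S(F)_{\rm rs}$ adapted to the parametrization \eqref{SF}. First I would study a regular semi-simple element $\gamma$ with small $|c|$; by \eqref{SF} such $\gamma$ is close to the second component of $B$ (the upper-triangular stratum), and since $f|_{B_0}=0$ the contribution of $f$ to the integral $\Orb_\gamma(f,s)$ is supported where $h^{-1}\gamma h$ has its lower-left entry in a fixed annulus away from $0$. Writing $h=\diag(t,1)$ with $t\in F^\times$, conjugation scales $b\mapsto tb$ and $c\mapsto t^{-1}c$, so the support condition pins $|t|$ to a bounded range depending only on $v_E(c)$ and the support of $f$; after a change of variables $t\mapsto t/c$ one sees that $\Orb_\gamma(f,s)=\eta_s(c)^{-1}\cdot(\text{a function of }a,d,b/c^{-1}\text{-type data})$. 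Symmetrically, small $|b|$ puts $\gamma$ near the first (lower-triangular) stratum and produces the term $\eta_s(b)A_0$. For $\gamma$ near $B_0$ itself one is near both components and both terms appear; the key is that once $|b|$ and $|c|$ are both small, the integration region splits into an $|t|$-large part (controlled by the $\eta_s(c)^{-1}$-homogeneous piece) and an $|t|$-small part (controlled by the $\eta_s(b)$-homogeneous piece), with no overlap, giving \eqref{germexpansion} as a genuine sum.

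The next step is to check that the coefficient functions $A_0,A_1$ are (i) valued in $\mbC[q^s,q^{-s}]$ and (ii) locally constant on $X$. For (i): after the substitution the remaining $t$-integral is over a compact open subset of $F^\times$ on which the integrand $f(\cdots)\eta(h)|h|^s$ has $|h|^s$ taking finitely many values $q^{ms}$; since $f$ is locally constant with compact support, the integral is a finite $\mbC$-linear combination of terms $q^{ms}$, i.e.\ an element of $\mbC[q^s,q^{-s}]$. For (ii): local constancy in $(a,d)\in\mcO_E^\times\times\mcO_E^\times$ follows because $f$ is locally constant and the integration region is uniformly compact; local constancy in the $E^\times/F^\times$-variable is exactly the content of the fact that $A_0$ depends on $b$ and $A_1$ on $c$ only through their class in $E^\times/F^\times$, which is forced by the $\eta_s$-homogeneity we already extracted (changing $b$ by a factor in $F^\times$ is absorbed into the prefactor $\eta_s(b)$). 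Here I would use Remark \ref{rmkB}: the behavior of a genuinely $F^\times$-invariant object near $B$ is determined by its behavior near $B_0$, which is what licenses recording only the data $(a,d,[b])$ and $(a,d,[c])$.

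For the converse, the strategy is to produce, for a single "basic" choice of locally constant $A_0$ (say $A_1=0$ and $A_0=q^{ms}\cdot 1_{U}(a,d,[b])$ for $U$ a small open), an explicit test function $f$ realizing it, and then take finite linear combinations and translates. The natural candidate is built from the functions $1(V_a,V_d)$ introduced in the proof of Proposition \ref{propfunctions}, or rather suitable translates $\lambda^*1(V_a,V_d)$ under $F^\times$, whose orbital integrals one computes directly: conjugating by $\diag(t,1)$ and integrating $\eta(t)|t|^s$ over the set of $t$ with $ta\in V_a$, $t^{-1}$(something)$\in\mcO_E$, etc., yields precisely a monomial $\eta_s(b)q^{ms}$ times an indicator in $(a,d,[b])$. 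Summing over an appropriate finite family, and using the elementary fact that any locally constant $\mbC[q^s,q^{-s}]$-valued function on the (profinite-times-$\mbZ$) space $X$ is a finite combination of such indicators, gives the desired $f$; one must also check $f|_{B_0}=0$ can be arranged, but this is automatic for the building blocks once $V_a,V_d$ avoid the relevant diagonal locus, or else can be imposed afterwards via Proposition \ref{propfunctions}, which does not change any orbital integral.

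The main obstacle is the careful bookkeeping near $B_0$: showing that the two germ contributions genuinely add without interference, i.e.\ that the $t$-integration domain for a $\gamma$ with both $|b|$ and $|c|$ small splits cleanly into a "large $|t|$" piece and a "small $|t|$" piece whose union is the whole support and whose intersection is empty (or of measure zero). This requires a uniform estimate: there is a constant, depending only on $\Supp(f)$, bounding $v_E(b)+v_E(c)$ from below on the relevant part of $S(F)_{\rm rs}$ near $B_0$ (this is where regular semi-simplicity and the explicit form of \eqref{SF} enter, via $1-N(a)=\overbar b c \ne 0$ forcing $a$ away from $U(1)(F)$ unless $bc$ is small), so that for $\gamma$ close enough to $B_0$ the two regions are forced apart. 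Once this separation lemma is in hand, both the direct statement and the structure of $A_0,A_1$ follow by the substitution computations sketched above, and the converse is a matter of exhibiting enough building blocks.
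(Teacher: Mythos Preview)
Your forward direction is essentially the paper's argument, phrased as a splitting of the $h$-integral into a small-$|h|$ piece (orbit near the upper-triangular stratum) and a large-$|h|$ piece (orbit near the lower-triangular stratum), with the middle range killed by $f|_{B_0}=0$. The paper organizes the same idea by covering $\Supp(f)\cap B$ with boxes $T_0,T_i,S_j$; in addition it writes down $A_0,A_1$ directly as integrals of $f$ along the two boundary strata,
\[
A_0(s;a,d,b)=\eta_s(b)^{-1}\!\int_{F^\times}\eta_s(h)\,f\!\begin{pmatrix}a&b/h\\0&d\end{pmatrix}dh,\qquad
A_1(s;a,d,c)=\eta_s(c)\!\int_{F^\times}\eta_s(h)\,f\!\begin{pmatrix}a&0\\ch&d\end{pmatrix}dh,
\]
which makes local constancy and the $F^\times$-invariance in $b,c$ immediate (via $h\mapsto \lambda h$) and replaces your heuristic substitution ``$t\mapsto t/c$'' (which is not literally licit since $c\notin F^\times$). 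Your ``separation lemma'' is not an obstacle: for $\gamma$ close enough to $B_0$ one has $|b|,|c|<\delta$ with $f=0$ on $\{|b|<\delta,\ |c|<\delta\}$, and then the ranges $|h|\le |b|/\delta$ and $|h|\ge \delta/|c|$ are disjoint with $f(h^{-1}\gamma h)=0$ in between.

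There is a genuine gap in your converse. The functions $1(V_a,V_d)$ from Proposition~\ref{propfunctions} (and their $F^\times$-translates) do \emph{not} have orbital integrals equal to a monomial $\eta_s(b)q^{ms}$ times an indicator: since $1(V_a,V_d)$ does not vanish on $B_0$, one computes
\[
\Orb_\gamma\big(1(V_a,V_d),s\big)=1_{V_a}(a)\,1_{V_d}(d)\sum_{k=-v(c)}^{v(b)}\eta(\pi_F)^k q^{-ks},
\]
a sum whose range depends on both $v(b)$ and $v(c)$, so it cannot be split as $\eta_s(b)A_0(b)+\eta_s(c)^{-1}A_1(c)$ with locally constant $A_0,A_1$; no finite linear combination of translates repairs this, and Proposition~\ref{propfunctions} cannot be invoked afterwards because it presupposes a function whose orbital integrals already behave correctly. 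The fix, which is exactly what the paper does, is to force one off-diagonal entry into a compact set $K\subset E^\times$ away from $0$: take $\beta=1_{\{a\in\mcO_E^\times,\ b\in K,\ c\in\mcO_E,\ d\in\mcO_E^\times\}}$ and set $f(\gamma)=\beta(\gamma)\,\eta_s(b)\,\Vol(F^\times b\cap K)^{-1}A_0(a,d,b)$ when $F^\times b\cap K\neq\emptyset$. Then $f|_{B_0}=0$ automatically, the $c$-contribution $A_1$ vanishes since $0\notin K$, and a one-line computation gives the prescribed $A_0$. The reduction to $A_0$ valued in $q^{0}\mbC$ (your ``monomial'' case) via a single $F^\times$-translate is exactly as you describe.
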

\begin{defn}
We call identity \eqref{germexpansion} the germ expansion of $f$. This terminology is taken from \cite{zhang2}. We also write $A_0(γ)$ and $A_1(γ)$ instead of $A_0(s;a,d,b)$ and $A_1(s;a,d,c)$. 
\end{defn}
\begin{proof}
Let $f∈C^\infty_c(S(F))$ with $f|_{B_0}=0$.\smallskip\\
\emph{Step 1:}
For $(a,d,b),(a,d,c)∈X$, we define
$$A_0(s;a,d,b):=\frac{1}{η_s(b)}\int_{F^\times} η_s(h) f\Big(\Big(\begin{smallmatrix} a & b/h\\0& d\end{smallmatrix}\Big)\Big)\ dh,$$
$$A_1(s;a,d,c):=η_s(c)\int_{F^\times} η_s(h) f\Big(\Big(\begin{smallmatrix} a & 0\\ch& d\end{smallmatrix}\Big)\Big)\ dh.$$
Note that these integrals are absolutely convergent, since the integrand has no support near $B_0$. They are $F^\times$-invariant in $b$ and $c$ and hence well defined.

\emph{Step 2:}
Now fix $(a,d)∈B_0$ and choose a neighborhood $V_a\times V_d⊂\mcO_E^\times\times\mcO_E^\times$ such that for all
$$γ∈\left\{\left(\begin{smallmatrix}V_a & *\\ * & V_d\end{smallmatrix}\right)\right\},$$
the value $f(γ)$ is independent of $a(γ)$ and $d(γ)$. Such a neighborhood exists, since $f$ is locally constant with compact support.

There exists an integer $N$ and two finite families of elements $c_i,b_j$ with $i=1,\ldots,n$ and $j=1,\ldots,m$ with the following properties. The open sets (in $S(F)$)

$$T_0:=\left\{\left(\begin{array}{cc} V_a & π^N\mcO_E \\ π^N\mcO_E & V_d \end{array}\right)\right\},\ 
T_i:=\left\{\left(\begin{array}{cc} V_a & π^N\mcO_E \\ c_i + π^N\mcO_E & V_d \end{array}\right)\right\},$$

$$S_j:=\left\{\left(\begin{array}{cc} V_a & b_j + π^N\mcO_E \\ π^N\mcO_E & V_d \end{array}\right)\right\}$$

are disjoint, cover $\Supp f\cap B$ and $f$ is constant on each of them. Let $Z$ be their union.

Then there exists a neighborhood $W_0$ of $\diag(a,d)$ such that for any $γ∈W_0$, we have $F^\times γ\cap\Supp(f)⊂Z$. Note that $f\vert_{T_0}\equiv 0$, so the maps $S(F)_{\rm{rs}}\ni γ\mapsto \Vol(F^\times γ \cap T_i)$ and $γ\mapsto \Vol(F^\times γ \cap S_j)$ are locally constant in a neighborhood $W_1$ of $\diag(a,d)$. Since they are clearly $F^\times$-invariant, they yield locally constant functions on $X$. It is now clear, that the theorem holds for $γ∈W_0\cap W_1$. Since $(a,d)∈B_0$ was arbitrary, this finishes the proof of the first part of the theorem.

\emph{Proof of the second part:}
Now let $A_0,A_1$ be given. We want to construct a suitable function $f∈S(F)_{\rm{rs}}$. By linearity and symmetry of the argument we assume that $A_1=0$ and that $A_0$ takes values in $q^{ks}\mbC⊂ℂ[q^s,q^{-s}]$ for some $k∈\mbZ$.

\emph{Reduction to $k=0$:} Assume for the moment the existence of $f$ whenever $A_0$ takes values in $q^0ℂ$. Let $f_0$ be a function with germ expansion associated to $q^{-ks}A_0$ and $A_1=0$. Choose $λ_0∈F^\times$ with valuation $v_F(λ_0)=k$. Then by the $η^{-1}_s$-invariance, $η(λ_0)λ_0^*f_0$ has germ expansion associated to $A_0$ and $A_1$.

\emph{Case $k=0$:} So we can assume that $A_0$ takes values in $\mbC$. Let $K⊂E^\times$ be a compact open subset such that $F^\times \cdot K=E^\times$ and let $β$ be the characteristic function of the set
$$\left\{\left(\begin{matrix} \mcO_E^\times & K \\ \mcO_E & \mcO_E^\times\end{matrix}\right)\right\}.$$
For $γ=\left(\begin{smallmatrix} a&b\\c&d\end{smallmatrix}\right)∈S(F)$ we define
$$f(γ):=\begin{cases} 0 & \tif\ F^\times b\cap K = \emptyset\\
                    β(γ) η_s(b) \Vol(F^\times b\cap K)^{-1} A_0(a,d,b) & \rm{otherwise}.\end{cases}$$

Clearly this function is locally constant with compact support. We compute the germ expansion of $f$ as in the first part of the theorem. For this we assume that $γ$ is near $B_0$.
\begin{align*}
&\ η_s(b)^{-1}\int_{F^\times} η_s(h) f\Big(\Big(\begin{smallmatrix} a & b/h\\0& d\end{smallmatrix}\Big)\Big)\ dh\\
= &\ \frac{η_s(b)^{-1}}{\Vol(F^\times b\cap K)} \int_{F^\times b\cap K} η_s(h)η_s(b/h) A_0(a,d,b/h) dh\\
= &\ \frac{1}{\Vol(F^\times b\cap K)} \int_{F^\times b\cap K} A_0(a,d,b) dh\\
= &\ A_0(a,d,b)\end{align*}
In the second to last equality, we used that $A_0$ is invariant under multiplication with $h∈F^\times$. The integral $A_1$ vanishes since $0\notin K$. This concludes the proof of the theorem.
\end{proof}

\begin{rmk}
The fact that $f$ can locally near $B_0$ be defined by polynomials in $q^s,q^{-s}$ which transform with $η_s$ is equivalent to $f|_{B_0}=0$. Namely if $f(\diag(a,d))\neq 0$, then the number of monomials $q^{ks}$ in $\Orb_γ(f,s)$ is not bounded for $γ$ approaching $\diag(a,d)$.
\end{rmk}

\begin{cor}\label{charactorbints}
Let $f∈C^\infty_c(S(F))$ and fix $s∈ℂ$. Then the function (on $S(F)_{\rm{rs}}$) $ϕ:γ\mapsto \Orb_γ(f,s)$ is $η_s^{-1}$-invariant and $\overbar{\Supp(ϕ)}/F^\times$ is compact. There exist locally constant functions $A_0,A_1:X→\mbC$ such that for all regular semi-simple $γ$ near $B_0$,
$$ϕ(γ)=η_s(b)A_0(γ)+η_s(c)^{-1}A_1(γ).$$

Conversely if $ϕ∈C^\infty(S(F)_{\rm{rs}})$ satisfies the above conditions, then there exists a function $f∈C^\infty_c(S(F))$ such that $\Orb_γ(f,s)=ϕ(γ)$ for all $γ∈S(F)_{\rm{rs}}$.
\end{cor}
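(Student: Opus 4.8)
The plan is to derive this from Theorem~\ref{thmorbints}: the characterization for a \emph{fixed} $s$ should follow from the ``universal'' version there (with coefficients Laurent-polynomial in $q^{\pm s}$), once one handles separately the contribution of $f|_{B_0}$, which Theorem~\ref{thmorbints} excludes. I will assume throughout that $q^s\neq -1$, so that the geometric series appearing below converge; the finitely many remaining values of $s$ occur only when $E/F$ is unramified and are genuinely degenerate, as I note at the end.

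\emph{Necessity.} Let $f\in C^\infty_c(S(F))$ and $ϕ(γ):=\Orb_γ(f,s)$. The $η_s^{-1}$-invariance is the transformation law $\Orb_{λ^{-1}γλ}(f,s)=η_s^{-1}(λ)\Orb_γ(f,s)$ recorded before Lemma~\ref{basicsonorbitalintegrals}, and since $\Supp(ϕ)$ consists of $F^\times$-conjugates of elements of the compact set $\Supp(f)$, the quotient $\overline{\Supp(ϕ)}/F^\times$ is compact. For the germ expansion I first split off the $B_0$-part: using that $f|_{B_0}$ is locally constant with compact support and that $1(V_a,V_d)|_{B_0}$ is the characteristic function of $V_a\times V_d$, I write $f=f_0+\sum_i r_i\,1(V_a^i,V_d^i)$ with $f_0|_{B_0}=0$. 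Theorem~\ref{thmorbints} then provides the germ expansion of $γ\mapsto\Orb_γ(f_0,s')$ with coefficients in $\mbC[q^{s'},q^{-s'}]$, which I evaluate at $s'=s$. For each of the finitely many remaining summands I compute directly: for $γ$ near $B_0$ with $a(γ)\in V_a$ and $d(γ)\in V_d$, the integrand $1(V_a,V_d)(h^{-1}γh)$ is the indicator of $\{h\in F^\times:-v_E(c(γ))\le v_E(h)\le v_E(b(γ))\}$, so $\Orb_γ(1(V_a,V_d),s)$ vanishes identically when $E/F$ is ramified (since then $η|_{\mcO_F^\times}$ is nontrivial) and equals a geometric sum $\sum_k(η_s(π_F))^k$ over this range of $k$ when $E/F$ is unramified. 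Because $(η_s(π_F))^{v_E(b)}$ is $η_s(b)$ times a locally constant function of $b$, and $(η_s(π_F))^{-v_E(c)}$ is $η_s(c)^{-1}$ times a locally constant function of $c$, the closed form of this sum is of the shape $η_s(b)A_0(γ)+η_s(c)^{-1}A_1(γ)$ with $A_0,A_1$ locally constant and $\mbC$-valued (the denominator $1-η_s(π_F)$ is nonzero by the standing hypothesis). Adding the contributions of $f_0$ and of the $1(V_a^i,V_d^i)$ gives the germ expansion of $ϕ$.

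\emph{Sufficiency.} Conversely, suppose $ϕ$ satisfies the stated conditions. Apply the converse part of Theorem~\ref{thmorbints} to $A_0,A_1$, viewed as taking constant values in $\mbC[q^s,q^{-s}]$, to obtain $f_1\in C^\infty_c(S(F))$ with $\Orb_γ(f_1,s)=η_s(b)A_0(γ)+η_s(c)^{-1}A_1(γ)$ for $γ$ near $B_0$. Then $ψ:=ϕ-ϕ_1$, where $ϕ_1(γ):=\Orb_γ(f_1,s)$, is $η_s^{-1}$-invariant, has $\overline{\Supp(ψ)}/F^\times$ compact (by the necessity part applied to $f_1$), and vanishes near $B_0$; since by Remark~\ref{rmkB} every $F^\times$-orbit passing close to $B$ also passes close to $B_0$, the $η_s^{-1}$-invariance of $ψ$ forces it to vanish on an $F^\times$-stable neighborhood of all of $B$. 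Hence $\Supp(ψ)\subset F^\times\cdot C$ for a compact $C\subset S(F)_{\rm rs}$. Now fix a compact open $K\subset E^\times$ with $F^\times K=E^\times$, chosen stable under $1+π_E^N\mcO_E$ for $N$ large, and set $V(b):=\Vol\{h\in F^\times:bh\in K\}$, a positive number depending only on $b$ modulo $1+π_E^N\mcO_E$. Then $Σ:=\Supp(ψ)\cap\{γ:b(γ)\in K\}$ is compact in $S(F)_{\rm rs}$, so $f_2(γ):=ψ(γ)\,1_K(b(γ))/V(b(γ))$ is locally constant and supported on $Σ$, hence lies in $C^\infty_c(S(F))$. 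For $γ$ with $ψ(γ)=0$, $ψ$ annihilates the whole orbit of $γ$ and $\Orb_γ(f_2,s)=0$; for $γ$ with $ψ(γ)\neq 0$, unfolding $\Orb_γ(f_2,s)$ and using $ψ(h^{-1}γh)=η_s^{-1}(h)ψ(γ)$ to cancel $η_s(h)$, together with the measure-preservation of $h\mapsto b(γ)/h$ (so the two volume factors cancel), yields $\Orb_γ(f_2,s)=ψ(γ)$. Therefore $f:=f_1+f_2\in C^\infty_c(S(F))$ satisfies $\Orb_γ(f,s)=ϕ(γ)$ for all $γ\in S(F)_{\rm rs}$.

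\emph{Main obstacle.} The delicate point is the germ expansion in the necessity part. A test function need not vanish on $B_0$, and Proposition~\ref{propfunctions}, which would arrange exactly this, is available only at $s=0$; so the $B_0$-contribution has to be computed directly and then recognized as a genuine germ expansion. This is precisely where the hypothesis $q^s\neq -1$ is needed: for $q^s=-1$ and $E/F$ unramified the geometric sum above degenerates to $v_E(1-N(a(γ)))+1$, which is unbounded as $γ\to B_0$, so the statement must be modified at those finitely many values of $s$ (modulo $2\pi i/\log q$).
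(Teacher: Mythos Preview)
Your proof is correct and follows the same two-step strategy as the paper: deduce the germ expansion from Theorem~\ref{thmorbints} after dealing with the $B_0$-contribution, and for the converse first match near $B_0$ via Theorem~\ref{thmorbints} and then realize the compactly supported remainder by a volume-normalization trick. The differences are minor. For necessity, the paper does not compute the orbital integral of $1(V_a,V_d)$ directly but instead invokes ``similar arguments as in the proof of Proposition~\ref{propfunctions}'', meaning one subtracts a suitable multiple of $1(V_a,V_d)-\eta_s(\lambda)\,\lambda^*1(V_a,V_d)$ (which has vanishing orbital integral at $s$ and agrees with a nonzero multiple of $1(V_a,V_d)$ on $B_0$) to reduce to $f|_{B_0}=0$. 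For sufficiency, the paper slices with a compact open $K\subset S(F)_{\rm rs}$ satisfying $F^\times K=\Supp(\phi)$ and sets $f(\gamma)=1_K(\gamma)\phi(\gamma)/\Vol(F^\times\gamma\cap K)$, whereas you slice via the $b$-coordinate; these are interchangeable.

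Your identification of the exceptional case is a genuine refinement. The paper's reduction needs some $\lambda\in F^\times$ with $\eta_s(\lambda)\neq 1$, which exists precisely when $\eta_s|_{F^\times}$ is nontrivial, i.e.\ unless $E/F$ is unramified and $q^s=-1$; and your direct computation shows that in that excluded case the orbital integral of $1(V_a,V_d)$ is unbounded near $B_0$, so the germ expansion in the stated form indeed fails. The paper's ``similar arguments'' phrase passes over this point.
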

\begin{proof}
Given $f$, we can assume that $f|_{B_0}=0$ by similar arguments as in the proof of Proposition \ref{propfunctions}. Then we apply the first part of Theorem \ref{thmorbints} and evaluate the functions $A_0$ and $A_1$ in the fixed value $s$.

Let now $ϕ$ be given. By the second part of Theorem \ref{thmorbints}, there exists a function $f_0$ such that $\Orb_γ(f_0,s)=ϕ(γ)$ in a neighborhood of $B_0$. By considering the difference $\Orb_γ(f_0,s)-ϕ(γ)$, we can assume that $ϕ=0$ in a neighborhood of $B_0$. In particular $\overbar{\Supp(ϕ)}=\Supp(ϕ)$.

Let $K⊂S(F)_{\rm{rs}}$ be open and compact, such that $F^\times K= \Supp(ϕ)$. Then $μ(γ):=\Vol(F^\times γ\cap K)$ is an $F^\times$-invariant smooth function on $S(F)_{\rm{rs}}$ which vanishes in a neighborhood of $B$. Note that $ϕ(h^{-1}γh)=h^*ϕ(γ)=η_s(h)^{-1}ϕ(γ)$ and set $f(γ):=1_K(γ)ϕ(γ)/μ(γ)$. (If $γ\notin F^\times K$, then we define $f(γ)=0$.) Then
$$\Orb_γ(f,s)=\int_{F^\times} η_s(h)1_K(h^{-1}γh)\frac{ϕ(h^{-1}γh)}{μ(h^{-1}γh)}dh=\int_{F^\times γ\cap K} \frac{ϕ(γ)}{μ(γ)}dh=ϕ(γ).$$
\end{proof}

Let $v$ be the extension of the normalized valuation from $F$ to $E$. 
\begin{cor}\label{charactderivorbints}
Let $f∈C^\infty_c(S(F))$. Then there exist locally constant functions
$$A_0,A'_0,A_1,A'_1:X→\mbC$$
such that for all regular semi-simple $γ=\left(\begin{smallmatrix}a & b\\ c& d\end{smallmatrix}\right)$ near $B_0$, there is an identity
$$\dOrb_γ(f)=η(b)\Big[v(b)A_0(a,d,b)+A_0'(a,d,b)\Big]+η(c)^{-1}\Big[v(c)A_1(a,d,c)+A'_1(a,d,c)\Big].$$

Conversely given $A_0,A_0',A_1,A_1'$ as above, there exists a function $f∈C^\infty_c(S(F))$ such that $γ\mapsto \dOrb_γ(f)$ satisfies the above identity near $B_0$.
\end{cor}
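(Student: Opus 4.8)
\textbf{Proof proposal for Corollary \ref{charactderivorbints}.}
The plan is to run the same machine as for Corollary \ref{charactorbints}, but now keeping track of the first-order behaviour in $s$ rather than just evaluating at $s=0$. First I would reduce, exactly as before, to the case $f|_{B_0}=0$: the subtraction argument of Proposition \ref{propfunctions} replaces $f$ by a function that vanishes on $B_0$ and has the same $\dOrb_\gamma$ for all regular semi-simple $\gamma$, so nothing is lost. With this reduction in force, Theorem \ref{thmorbints} gives locally constant functions $\mathcal A_0,\mathcal A_1:X\to\mbC[q^s,q^{-s}]$ with
$$\Orb_\gamma(f,s)=\eta_s(b)\,\mathcal A_0(s;a,d,b)+\eta_s(c)^{-1}\mathcal A_1(s;a,d,c)$$
for $\gamma$ near $B_0$. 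Now I differentiate in $s$ and set $s=0$. Write $\eta_s(b)=\eta(b)q^{-v(b)s}$ (using the normalization $|\cdot|=q^{-v(\cdot)}$ on $E$), so that $\frac{d}{ds}\big|_{s=0}\eta_s(b)=-\eta(b)v(b)\log q$; similarly for $\eta_s(c)^{-1}$. Applying the product rule to each of the two terms yields
$$\dOrb_\gamma(f)=\eta(b)\Big[-v(b)\log q\cdot\mathcal A_0(0;a,d,b)+\tfrac{d}{ds}\big|_{0}\mathcal A_0(s;a,d,b)\Big]+\eta(c)^{-1}\Big[v(c)\log q\cdot\mathcal A_1(0;a,d,c)+\tfrac{d}{ds}\big|_{0}\mathcal A_1(s;a,d,c)\Big].$$
Setting $A_0(a,d,b):=-\log q\cdot\mathcal A_0(0;a,d,b)$, $A_0'(a,d,b):=\frac{d}{ds}\big|_{0}\mathcal A_0(s;a,d,b)$, and $A_1,A_1'$ analogously (with a sign flip coming from the inverse character), one obtains the claimed identity; each of these is locally constant on $X$ because $\mathcal A_0,\mathcal A_1$ are.

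For the converse, I would again exploit linearity and the symmetry between the two terms to reduce to the case $A_1=A_1'=0$, so that we must realize
$$\dOrb_\gamma(f)=\eta(b)\big[v(b)A_0(a,d,b)+A_0'(a,d,b)\big]$$
near $B_0$. The idea is to produce, via the second part of Theorem \ref{thmorbints}, a function whose $s$-germ is a polynomial in $q^s$ chosen so that its $s$-derivative at $0$ hits the prescribed data. Concretely, for a fixed value $b$ one wants $\mathcal A_0(s;a,d,b)$ with $\mathcal A_0(0;\cdot)=-A_0/\log q$ and $\frac{d}{ds}\big|_0\mathcal A_0(s;\cdot)=A_0'$; one can take $\mathcal A_0(s;a,d,b)=-A_0(a,d,b)/\log q+s\cdot A_0'(a,d,b)$, but since Theorem \ref{thmorbints} only supplies $f$ for $\mathcal A_0$ valued in $\mbC[q^s,q^{-s}]$, I would instead use $\mathcal A_0(s;\cdot)=\alpha(a,d,b)+\beta(a,d,b)q^{s}$ and solve the two linear equations $\alpha+\beta=-A_0/\log q$, $\beta\log q=A_0'$ for the locally constant functions $\alpha,\beta$ (so $\beta=A_0'/\log q$, $\alpha=-A_0/\log q-A_0'/\log q$). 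This $\mathcal A_0$ is locally constant and $\mbC[q^s,q^{-s}]$-valued, so Theorem \ref{thmorbints} furnishes an $f\in C^\infty_c(S(F))$ with $\Orb_\gamma(f,s)=\eta_s(b)\mathcal A_0(s;a,d,b)$ near $B_0$; differentiating at $s=0$ as above recovers the desired expression.

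The only genuinely delicate point is bookkeeping: one must make sure the constants $\log q$, the signs on $\eta_s(c)^{-1}$, and the passage $|\cdot|=q^{-v(\cdot)}$ are handled consistently, so that the $v(b)$-linear coefficient is indeed $\eta(b)v(b)A_0$ with $A_0$ locally constant, and similarly that the converse construction inverts exactly these relations. There is no analytic obstacle beyond what Theorem \ref{thmorbints} already provides; the content is entirely in differentiating the germ expansion and, for the converse, in choosing a two-term polynomial germ whose value and first derivative at $s=0$ are prescribed independently, which is possible precisely because $\{1,q^s\}$ are linearly independent with linearly independent derivatives at $0$.
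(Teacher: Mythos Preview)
Your proposal is correct and follows essentially the same route as the paper: reduce to $f|_{B_0}=0$ via Proposition \ref{propfunctions}, differentiate the germ expansion of Theorem \ref{thmorbints} at $s=0$ using the product rule, and for the converse pick polynomial germs with prescribed value and derivative at $s=0$ before invoking the second half of Theorem \ref{thmorbints}. The only cosmetic differences are that the paper treats both the $b$- and $c$-terms simultaneously rather than reducing by symmetry, and that it merely asserts the existence of suitable $C_0(s),C_1(s)\in\mbC[q^s,q^{-s}]$ whereas you write down the explicit two-term polynomial $\alpha+\beta q^s$; neither changes the substance of the argument.
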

\begin{proof}
By Proposition \ref{propfunctions}, we can assume that $f|_{B_0}=0$. Let
$$\Orb_γ(f)=η_s(b)C_0(s;a,d,b)+η_s(c)^{-1}C_1(s;a,d,c)$$
be the germ expansion of $f$ from Theorem \ref{thmorbints}. Its derivative in $s=0$ is given as
\begin{equation}\begin{aligned}\label{iddOrb}
\dOrb_γ(f)= &\ η(b) \Big( C_0'(0;a,d,b) - v(b)\log(q)C_0(0;a,d,b)\Big)\\
 + &\ η(c)^{-1}\Big( C_1'(0;a,d,c) + v(c)\log(q)C_1(0;a,d,c)\Big).
\end{aligned}\end{equation}
We now perform the obvious substitutions.

Conversely if $A_0,A_0',A_1,A_1'$ are given, then we choose families of polynomials $C_0(s),C_1(s):X→\mbC[q^s,q^{-s}]$ having the following values and derivatives in $s=0$:
$$C_0'(0;γ)=A_0'(γ)\tand -\log(q)C_0(0;γ) = A_0(γ),$$
$$C_1'(0;γ)=A_1'(γ)\tand \log(q)C_1(0;γ) = A_1(γ).$$
Then we apply the second part of Theorem \ref{thmorbints}.
\end{proof}

\begin{cor}\label{maincor}
Let $f∈C^\infty_c(S(F))$ be such that $\Orb_γ(f)=0$ for all $γ∈S(F)_{\rm{rs}}$. Then there exists a function  $f'∈C^\infty_c(S(F))$ such that for all $γ∈S(F)_{\rm{rs}}$,
$$\dOrb_γ(f)=\Orb_γ(f').$$
\end{cor}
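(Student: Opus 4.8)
The plan is to verify that the function $\varphi := \bigl(\gamma \mapsto \dOrb_\gamma(f)\bigr)$ on $S(F)_{\rm rs}$ satisfies the three hypotheses of the converse part of Corollary \ref{charactorbints} at the value $s=0$, and then to take $f'$ to be the function that corollary produces. Two of the hypotheses are essentially immediate. First, by Lemma \ref{basicsonorbitalintegrals}(a) together with the assumption $\Orb_\gamma(f)=0$ for all $\gamma$, the function $\varphi$ is $\eta$-invariant; since $\eta|_{F^\times}$ is quadratic, this is the same as $\eta_0^{-1}$-invariance. Second, if $F^\times\gamma$ is disjoint from $\Supp(f)$ then $\Orb_\gamma(f,s)$ vanishes identically in $s$, hence so does $\varphi(\gamma)$; thus $\Supp(\varphi)$ is contained in the same $F^\times$-stable set as $\Supp\bigl(\gamma\mapsto\Orb_\gamma(f,s)\bigr)$, whose closure is compact modulo $F^\times$.

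The real work is the germ expansion of $\varphi$ near $B_0$. Replacing $f$ by the function supplied by Proposition \ref{propfunctions} changes neither $\Orb_\gamma(f)$, nor $\dOrb_\gamma(f)$, nor the hypothesis, so we may assume $f|_{B_0}=0$ and invoke Theorem \ref{thmorbints}: near $B_0$ one has
\[
\Orb_\gamma(f,s)=\eta_s(b)\,C_0(s;a,d,b)+\eta_s(c)^{-1}C_1(s;a,d,c)
\]
with $C_0,C_1\colon X\to\mbC[q^s,q^{-s}]$ locally constant. Differentiating at $s=0$ reproduces the expansion of Corollary \ref{charactderivorbints}, in which the coefficients of $\eta(b)v(b)$ and of $\eta(c)^{-1}v(c)$ are $-\log(q)\,C_0(0;\cdot)$ and $\log(q)\,C_1(0;\cdot)$. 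Hence it suffices to show $C_0(0;\cdot)=C_1(0;\cdot)=0$: granting this, $\dOrb_\gamma(f)=\eta(b)\,C_0'(0;a,d,b)+\eta(c)^{-1}C_1'(0;a,d,c)$ near $B_0$ with $C_0'(0;\cdot),C_1'(0;\cdot)\colon X\to\mbC$ locally constant, which is precisely a germ expansion of the shape occurring in Corollary \ref{charactorbints} for $s=0$.

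To prove the vanishing, set $s=0$ in the displayed expansion and use the hypothesis: $\eta(b)\,C_0(0;a,d,b)+\eta(c)^{-1}C_1(0;a,d,c)=0$ for every regular semisimple $\gamma$ close to $B_0$. Fix $\diag(a_0,d_0)\in B_0$. On the regular semisimple locus near $\diag(a_0,d_0)$ the entries obey $c=(1-N(a))/\overbar b$ and $d=-\overbar a\,b/\overbar b$, with $a$ near $a_0$ (and $N(a)\neq 1$) and $b$ small; note in particular $[c]=[\overbar b\,]^{-1}$ in $E^\times/F^\times$ since $1-N(a)\in F$. By local constancy of $C_0,C_1$ on $X$, the values $C_0(0;a,d,b)=\kappa_0$ and $C_1(0;a,d,c)=\kappa_1$ are constant on a suitably small such neighborhood. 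If $\kappa_0\neq 0$, the relation forces $\eta(bc)=-\kappa_1/\kappa_0$ to be constant there; but $bc=(b/\overbar b)(1-N(a))$, and as $a$ runs over a neighborhood of $a_0\in U(1)(F)$ the norm $N(a)$ sweeps out a neighborhood of $1$ in $F$, so $1-N(a)$ sweeps out a punctured neighborhood of $0$ in $F$ on which the quadratic character $\eta|_{F^\times}$ is non-constant, while $\eta(b/\overbar b)$ stays constant — a contradiction. Hence $\kappa_0=0$, and then $\eta(c)^{-1}\kappa_1=0$ forces $\kappa_1=0$. As $\diag(a_0,d_0)$ was arbitrary this gives $C_0(0;\cdot)=C_1(0;\cdot)=0$. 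Finally, Corollary \ref{charactorbints} (converse part, $s=0$) applied to $\varphi$ yields $f'\in C^\infty_c(S(F))$ with $\Orb_\gamma(f')=\varphi(\gamma)=\dOrb_\gamma(f)$ for all $\gamma\in S(F)_{\rm rs}$.

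The step I expect to be the main obstacle is exactly this vanishing $C_0(0;\cdot)=C_1(0;\cdot)=0$: although the two germ functions look like independent data, the constraint $c=(1-N(a))/\overbar b$ among the entries of a regular semisimple element — combined with $\eta|_{F^\times}$ being non-trivial on every punctured neighborhood of $0$ in $F$ — couples them tightly enough that the identity $\Orb_\gamma(f)\equiv 0$ kills both of their $s=0$ values, and this is what makes $\dOrb_\bullet(f)$ itself an orbital-integral function.
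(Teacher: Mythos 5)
Your proof is correct, and at the top level it has the same skeleton as the paper's: reduce via Proposition \ref{propfunctions} to $f|_{B_0}=0$, read off from \eqref{iddOrb} that the $v(b)$- and $v(c)$-coefficients in the expansion of $\dOrb_\gamma(f)$ are $-\log(q)C_0(0;\cdot)$ and $\log(q)C_1(0;\cdot)$, prove these vanish, and then feed the resulting constant-germ expansion (together with the $\eta$-invariance from Lemma \ref{basicsonorbitalintegrals}(a) and the support condition) into the converse part of Corollary \ref{charactorbints}. Where you genuinely diverge is in the proof of the key vanishing $C_0(0;\cdot)=C_1(0;\cdot)=0$. The paper compares the identity $\eta(b)C_0+\eta(c)^{-1}C_1=0$ at $\gamma$ and at $\lambda^{-1}\gamma\lambda$ with $\eta(\lambda)=-1$; but conjugation by $\lambda$ sends $b\mapsto\lambda^{-1}b$ and $c\mapsto\lambda c$, so it multiplies \emph{both} germ terms by $\eta(\lambda)=-1$ and the second displayed equation is just the negative of the first --- as literally written, that comparison yields no information. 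Your argument instead exploits the constraint $bc=(b/\overbar{b})(1-N(a))$ internal to $S(F)_{\mathrm{rs}}$: near a fixed point of $B_0$ the value $\eta(b/\overbar{b})$ is locked (and $[b],[c]\in E^\times/F^\times$ are pinned down by $d=-\overbar{a}b/\overbar{b}$, so $\kappa_0,\kappa_1$ are genuinely constant), while $\eta(1-N(a))$ takes both values $\pm1$ on every punctured neighborhood of $0$ in $F^\times$; this decouples the two terms and forces $\kappa_0=\kappa_1=0$. That is the mechanism which actually closes the argument (and is presumably what the paper intended to invoke), so your version is not only correct but repairs the one step of the paper's proof that does not stand up to a literal reading.
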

\begin{proof}
By Corollary \ref{charactorbints} it is enough to show that $A_0$ and $A_1$ in Corollary \ref{charactderivorbints} vanish. Let us consider the germ expansion of $f$,
$$\Orb_γ(f)=η(b(γ))C_0(γ)+η(c(γ))^{-1}C_1(γ).$$ 
It follows from formula \eqref{iddOrb}, that $A_0=-\log(q)C_0$ and $A_1=\log(q)C_1$. Hence we need to show that $C_0=C_1=0$.

Let $λ∈F^\times$ with $η(λ)=-1$. Then by the assumption on $f$, for any $γ$ close enough to $B_0$,
$$η(b(γ))C_0(γ)+η(c(γ))^{-1}C_1(γ)=0$$
and
$$η(b(γ))C_0(γ)+η(λc(γ))^{-1}C_1(λ^{-1}γλ)=0.$$
Since $C_1(λ^{-1}γλ)=C_1(γ)$, it follows that $C_0=C_1=0$.
\end{proof}

\begin{rmk}
Let $χ:E^\times→\mbC^\times$ be a smooth character or let $χ_s$ be a family of such characters. Then all the statements in this subsection should have analogues for a $χ$-twisted orbital integral $\Orb_γ(f,χ)$ and the family $\Orb_γ(f,χ_s)$. They should also hold for orbital integrals on $GL_2(E)$, since we never used the structure of $S(F)$ in our proofs.
\end{rmk}

\subsection{Transfer of Functions}\label{transfer}
For $ε∈F^\times$, we let $U^ε:=U(ε\oplus 1)$ be the corresponding unitary group. Then $U^ε(F)⊂GL_2(E)$ is stable under the conjugation by $U(1)(F)$. For $δ∈U^ε(F)_{\rm{rs}}$ and $φ∈C^\infty_c(U^ε(F))$, we define
$$\Orb_δ(φ)=\int_{U(1)(F)} φ(h^{-1}γh)dh.$$

Elements $γ∈S(F)_{\rm{rs}}$ and $δ∈U^ε(F)_{\rm{rs}}$ are said to match, if they are conjugate under $E^\times$. A direct computation shows that a given γ matches some $δ∈U^ε(F)$ if and only if $(1-N(a))/ε$ is a norm of $E/F$. Assume this is the case and let $N(x)=(1-N(a))/ε$. Then
\begin{align}\label{match}γ\ \text{matches } δ=\left(\begin{array}{cc}
a & x\\
\overbar{x}bε/\overbar b & -\overbar ab/\overbar b
\end{array}\right)∈U^ε(F).\end{align}
Conversely, a given $δ∈U^ε(F)_{\rm{rs}}$ has a match in $S(F)$.

As in the introduction, we let $U_0=U^{ε_0}$ and $U_1=U^{ε_1}$ be the unitary groups associated to a norm $ε_0$ and a non-norm $ε_1$. Then the matching relation defines a bijection of conjugation orbits:
$$[S(F)_{\rm{rs}}]\iso [U_0(F)_{\rm{rs}}]\sqcup [U_1(F)_{\rm{rs}}].$$

For a given function $f∈C^\infty_c(S(F))$, the orbital integral $γ\mapsto \Orb_γ(f)$ is not $F^\times$-invariant and hence cannot descend to the quotient $[S(F)_{\rm{rs}}]$. This motivates the following definition.

\begin{defn}
We define the transfer factor $ω$ by the formula
\begin{equation}\label{trafa}ω(γ):=η(c(γ)).\end{equation}
This is a smooth $η$-invariant function on $S(F)_{\rm{rs}}$.
\end{defn}

\begin{defn}\label{deftransfer}
We say that $f∈C^\infty_c(S(F))$ and $(φ_0,φ_1)∈C^\infty_c(U_0(F))\times C^\infty_c(U_1(F))$ are transfers of each other if for each $γ∈S(F)_{\rm{rs}}$ matching $δ∈U_i(F)$, there is an equality
$$ω(γ)\Orb_γ(f)=\Orb_δ(g_i).$$

In particular, $f∈C^\infty_c(S(F))$ is a transfer of $(0,0)$ if and only if $\Orb_γ(f)=0$ for all $γ∈S(F)_{\rm{rs}}$.
\end{defn}

\begin{rmk}\label{robustness}
Note that if $h∈E^\times$, then conjugation by $h$ induces an isomorphism $U^ε→U^{N(h)ε}$. If $γ∈S(F)_{\rm{rs}}$ matches $δ∈U^ε(F)$, then $γ$ matches $h^{-1}δh∈U^{N(h)ε}(F)$. In particular, the pullback $h^*:C^\infty_c(U^{N(h)ε}(F))→C^\infty_c(U^ε(F))$ is an isomorphism which preserves the transfer in an obvious sense. It follows that the choice of $ε_0$ and $ε_1$ is irrelevant in this analytic setup.
\end{rmk}

The following theorem was already proven by W. Zhang in much greater generality, see \cite[Theorem 2.6]{zhang3}.

\begin{thm}\label{existtransfer}
Given a pair of functions $(φ_0,φ_1)∈C^\infty_c(U_0(F))\times C^\infty_c(U_1(F))$, there exists a transfer $f∈C^\infty_c(S(F))$.

Conversely any function $f∈C^\infty_c(S(F))$ has a transfer $(φ_0,φ_1)$.
\end{thm}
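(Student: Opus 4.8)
The plan is to exploit the germ-expansion characterizations from Section 3 to reduce the transfer problem to a matching of locally constant germ data, and to handle the region away from the bad locus by a direct partition-of-unity argument. First I would treat the two directions symmetrically and in fact reduce to one: given $f \in C^\infty_c(S(F))$, I want to produce $(\varphi_0,\varphi_1)$, and given $(\varphi_0,\varphi_1)$ I want to produce $f$; both will be built the same way, by prescribing orbital integrals on $S(F)_{\mathrm{rs}}$ and then invoking Corollary \ref{charactorbints} (for $S(F)$) together with its evident analogue on the unitary groups, which holds by the last Remark of \S\ref{germs}. So the real content is: the function $\gamma \mapsto \omega(\gamma)\Orb_\gamma(f)$, transported via the matching bijection $[S(F)_{\mathrm{rs}}] \iso [U_0(F)_{\mathrm{rs}}] \sqcup [U_1(F)_{\mathrm{rs}}]$, is of the right shape to be an orbital-integral function on $U_0(F) \sqcup U_1(F)$, and conversely.

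Concretely, I would first use Proposition \ref{propfunctions} to normalize $f$ so that $f|_{B_0} = 0$, and write its germ expansion $\Orb_\gamma(f) = \eta(b)A_0(\gamma) + \eta(c)^{-1}A_1(\gamma)$ near $B_0$ as in Theorem \ref{thmorbints} (specialized at $s=0$). Since $\omega(\gamma) = \eta(c(\gamma))$ is $\eta$-invariant, the function $\psi(\gamma) := \omega(\gamma)\Orb_\gamma(f)$ is $F^\times$-invariant, hence descends to $[S(F)_{\mathrm{rs}}]$; its support has compact image mod $F^\times$. The matching formula \eqref{match} is explicit and algebraic in the entries, so pulling $\psi$ back along matching gives, on each $U_i(F)_{\mathrm{rs}}$, a $U(1)(F)$-invariant smooth function with compact-mod-$U(1)(F)$ support. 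The key verification is that near the non-regular locus of $U_i(F)$ this pulled-back function has a germ expansion of the form required by the unitary analogue of Corollary \ref{charactorbints}: one checks that the bad locus of $U^\varepsilon(F)$ is parametrized just as $B$ in \eqref{SF} (diagonal and anti-diagonal strata), that the matching map \eqref{match} sends a neighborhood of the analogue of $B_0$ on the $S$-side to such a neighborhood on the $U$-side, and that $\eta(b)A_0 + \eta(c)^{-1}A_1$ transforms correctly. I expect this bookkeeping — tracking how $b$, $c$, $x$, $\bar x$ transform under \eqref{match} and confirming the germ data is preserved — to be the main technical obstacle, though it is essentially a direct computation with the explicit parametrizations.

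For the converse, starting from $(\varphi_0,\varphi_1)$ I would take their orbital-integral functions $\gamma \mapsto \Orb_\delta(\varphi_i)$, which by the unitary analogue of Corollary \ref{charactorbints} are $U(1)(F)$-invariant with suitable germ expansions, transport them to $[S(F)_{\mathrm{rs}}]$ via the inverse matching, divide by $\omega(\gamma) = \eta(c(\gamma))$ (which is nowhere zero on $S(F)_{\mathrm{rs}}$ and $\eta$-invariant, hence the quotient is again $\eta^{-1}$-invariant and smooth), check the germ expansion is of the form allowed by Corollary \ref{charactorbints}, and then invoke the ``conversely'' half of that corollary to realize it as $\Orb_\gamma(f,0)$ for some $f \in C^\infty_c(S(F))$. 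Since division by $\omega$ and the matching bijection are both explicit, the germ-compatibility check here is the mirror image of the one above. I would remark that the choice of $\varepsilon_0, \varepsilon_1$ is irrelevant by Remark \ref{robustness}, and that away from the singular loci everything is automatic since locally constant $F^\times$- (resp. $U(1)(F)$-) invariant compactly-supported data can be matched freely; all the subtlety is concentrated in the behavior near $B_0$ and its unitary counterpart, exactly as flagged in Remark \ref{rmkB}.
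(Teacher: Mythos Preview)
Your overall strategy is correct and matches the paper's: reduce both directions to Corollary \ref{charactorbints} by checking that the transported orbital-integral function has the right germ behavior near $B_0$. However, you overestimate the difficulty of the unitary side and slightly misattribute its justification. The ``unitary analogue of Corollary \ref{charactorbints}'' does not follow from the last Remark of \S\ref{germs}, which concerns the $F^\times$- (or $E^\times$-) action; rather, the unitary side is \emph{much simpler} because $U(1)(F)$ is compact. The paper uses this directly: the neighborhoods $\diag(a,d)+M_2(\pi^N\mcO_E)$ are $U(1)(F)$-stable, so $\Orb_\delta(\varphi_i)$ is literally constant near each diagonal point, equal to $C_i(a,d):=\varphi_i(\diag(a,d))$. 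Thus there is no nontrivial germ expansion on the unitary side, and the ``bookkeeping'' you anticipate collapses to solving a $2\times 2$ linear system: writing $\omega(\gamma)\Orb_\gamma(f)=\eta(1-N(a))\eta(b_0/\overbar{b_0})A_0+A_1$ and matching against the piecewise-constant $C_i$ depending on the sign of $\eta(1-N(a))$. The explicit solution is $A_0=\eta(\overbar{b_0}/b_0)(C_0-C_1)/2$, $A_1=(C_0+C_1)/2$.

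For the $S(F)\to U_i(F)$ direction, the paper likewise avoids any unitary germ machinery: since $\omega(\gamma)\Orb_\gamma(f)$ takes exactly two values near each $\diag(a,d)$ (depending on $\eta(1-N(a))$), one simply chooses $\varphi_i$ with $\varphi_i(\diag(a,d))=C_i(a,d)$ and then corrects the compactly supported remainder on $U_i(F)_{\rm{rs}}$ by a direct partition-of-unity construction (dividing by orbit volume). Your plan would arrive at the same place once you notice the compactness of $U(1)(F)$; the paper's route just makes this explicit from the start and thereby replaces your ``main technical obstacle'' with a two-line computation.
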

\begin{proof}
Consider a pair of functions  $(φ_0,φ_1)∈C^\infty_c(U_0(F))\times C^\infty_c(U_1(F))$ and the map
$$ϕ:γ\mapsto\begin{cases} ω(γ)^{-1}\Orb_δ(φ_0) & \tif γ \text{ matches } δ∈U_0(F)\\
                          ω(γ)^{-1}\Orb_δ(φ_1) & \tif γ \text{ matches } δ∈U_1(F).\end{cases}$$
It is $η$-invariant and $\overbar{\Supp(ϕ)}/F^\times$ is compact. According to Corollary \ref{charactorbints} it is enough to show that $ϕ$ has a germ expansion.

Let $i∈\{0,1\}$ and fix a diagonal matrix $\diag(a,d)∈U_i(F)$. The open neighborhoods $\diag(a,d)+M_2(π^N\mcO_E)$ are stable under conjugation by $U(1)(F)$. Since $φ_i$ is locally constant, it follows that $\Orb_δ(φ_i)$ is constant near $\diag(a,d)$ with value $C_i(a,d):=φ_i(\diag(a,d))$.

Note that each $γ∈S(F)_{\rm{rs}}$ satisfies $c(γ)=(1-N(a(γ)))/\overbar{b(γ)}$ and $d(γ)=-\overbar{a(γ)}b(γ)/\overbar{b(γ)}$. Let $b_0$ be such that $-ab_0/\overbar{b_0}=d$. Then if $γ$ is close to $\diag(a,d)$, then $b(γ)$ is close to $F^\times b_0$ and hence $η(b(γ)/\overbar{b(γ)})=η(b_0/\overbar{b_0})$.

Expanding the definition of transfer, we see that we need to solve the following system of equations of functions on $X$:
\begin{align*}
&η(1-N(a))η(b_0/\overbar{b_0})A_0(a,d,b)+A_1(a,d,c)\\
=& \begin{cases}C_0(a,d) & \tif\ η(1-N(a))=1\\
                C_1(a,d) & \tif\ η(1-N(a))=-1\end{cases}
\end{align*}
A solution is given by
\begin{equation}\label{solutiontransfer}A_0(a,d,b):=η(\overbar{b_0}/b_0)(C_0-C_1)/2 \tand A_1(a,d,c):=(C_0+C_1)/2.
\end{equation}

For the converse, let $f∈C^\infty_c(S(F))$ and fix $a,d∈U(1)(F)$. If a regular semi-simple $γ$ is near $\diag(a,d)$, then $ω(γ)η(b(γ))=η(1-N(a))η(b_0/\overbar{b_0})=\pm η(b_0/\overbar{b_0})$ as explained above. It follows that $ω(γ)\Orb_γ(f)$ takes only two values near $\diag(a,d)$, depending on $η(1-N(a))$. Denote these values by $C_0(a,d)$ and $C_1(a,d)$, meaning
$$ω(γ)\Orb_γ(f)=\begin{cases}C_0(a,d) & \tif 1-N(a) \text{ is a norm}\\
                             C_1(a,d) & \tif 1-N(a) \text{ is not a norm},\end{cases}$$
whenever $γ$ is near $B_0$. Consider the map $$Ψ_i:U_i(F)_{\rm{rs}}\niδ\mapsto ω(γ)\Orb_γ(f)$$ where $γ$ is a match for $δ$. We need to show that $Ψ_0$ and $Ψ_1$ are given by orbital integrals on $U_0(F)$ and $U_1(F)$.

Clearly there exist $(φ_0,φ_1)∈C^\infty_c(U_0(F))\times C^\infty_c(U_1(F))$ with $φ_i(\diag(a,d))=C_i(a,d)$ for all $a,d∈U(1)(F)$. Then $\Orb_δ(φ_i)=C_i(a,d)$ for regular semi-simple $δ$ in a neighborhood of $\diag(a,d)$. We now consider the difference $α_i(δ):=Ψ_i(δ)-\Orb_δ(φ_i)$. Denote its support by $K_i⊂U_i(F)_{\rm{rs}}$. It is open and compact. 

The function $δ\mapsto \Vol(U(1)(F)δ\cap K_i)$ is locally constant and $U(1)(F)$-invariant. For $δ∈K_i$ we define
$$\widetilde{α_i}(δ):=\begin{cases} 0 & \tif\ U(1)(F)δ\cap K_i = \emptyset\\
                                α_i(δ)/\Vol(U(1)(F)δ\cap K_i) & \rm{otherwise}.\end{cases}$$

Then $\Orb_δ(\widetilde{α_i})=α_i$. And hence $Ψ_i(δ)=\Orb_δ(φ_i+\widetilde{α_i})$.
\end{proof}

\section{The Arithmetic Transfer identity}
\subsection{The group of quasi-isogenies $G$}
In the introduction we defined $G⊂\Aut^0(\mbX^{(2)})$ to be the group of $\mcO_E$-linear quasi-isogenies which preserve the polarization. We can identify this group as follows. Consider the embedding $E→M_2(D)$ defined by the action of $\mcO_E$ on $\mbX^{(2)}$. It is given as
$$x \mapsto \begin{cases}\left(\begin{smallmatrix} x & 0 \\ 0 & \overbar{x} \end{smallmatrix}\right) & \tif E/F \text{ is ramified or if } i+j \text{ is even}\\
                \left(\begin{smallmatrix} x & 0 \\ 0 & x \end{smallmatrix}\right) & \text{otherwise}.\end{cases}$$

In the first case, let $\varpi∈D$ be an element with $-\varpi^2=ε_1$ and $\varpi a=\overbar{a}\varpi$ for all $a∈\mcO_E$. Then the $E$-linear quasi-endomorphisms of $\mbX^{(2)}$ are given by $\varpi M_2(E)\varpi^{-1}⊂M_2(D)$ where the notation means conjugation by $\diag(\varpi,1)$. In the second case, the centralizer of $E$ is $M_2(E)⊂M_2(D)$, and we let $\varpi∈E$ be such that $N(\varpi)=ε_0$. In any case, $G⊂\varpi M_2(E)\varpi^{-1}$ is the group of matrices $A$ such that $A^*A=\id_2$, where $*$ denotes the transposition and standard involution.

We now consider both $G$ and $GL_2(E)$ as subgroups of $GL_2(D)$. Then conjugation with $\diag(\varpi^{-1},1)$ defines an isomorphism
\begin{equation}\label{GUIso}
G\overset{\iso}{→}\begin{cases} U_1(F) & \tif E/F \text{ is ramified or if } i+j \text{ is even}\\
              U_0(F) & \text{otherwise.}\end{cases}\end{equation}

Let $S(F)_G⊂S(F)_{\rm{rs}}$ be the elements which match in $U_1(F)$ if $E/F$ is ramified or if $i+j$ is even. Otherwise let $S(F)_G$ be the elements matching in $U_0(F)$.
\begin{defn} An element $γ∈S(F)_G$ is said to match $g∈G$, if it matches a $δ\in U_i(F)$ which maps to $g$ under isomorphism \eqref{GUIso}.\end{defn}
A direct computation using formula \eqref{match} shows that $γ∈S(F)_G$ matches
\begin{align}\label{ultimateg}
g=\left(\begin{array}{cc}
\varpi a\varpi^{-1} & \varpi x\\
-\overbar{x}bε/\overbar{b}\cdot\varpi^{-1} & -\overbar ab/\overbar{b}
\end{array}\right),\end{align}
where $ε=ε_0$ or $ε=ε_1$ depending on the case and $N(x)=(1-N(a))/ε$. We complement Remark \ref{robustness} with the following corollary.

\begin{cor}
Let $γ∈S(F)_G$ match $g∈G$. Then $g$ is unique up to conjugation by $U(1)(F)$, independent of the chosen $ε_0,ε_1$ in the definition of the unitary groups.
\end{cor}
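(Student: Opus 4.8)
The plan is to reduce the corollary to two independent uniqueness statements: uniqueness of the matching $\delta$ up to $U(1)(F)$-conjugacy within a fixed unitary group, and independence of the choice of $\varepsilon_0,\varepsilon_1$. For the first point, I would argue as follows. By the matching bijection recalled in Section~\ref{transfer}, the relation ``$\gamma$ matches $\delta$'' identifies the $F^\times$-orbit of $\gamma$ with a single $U(1)(F)$-orbit in $U_0(F)_{\rm rs}\sqcup U_1(F)_{\rm rs}$; in particular, if $\gamma\in S(F)_G$, then all its matches lie in one unitary group (the correct one, $U_1(F)$ or $U_0(F)$ depending on the case) and form exactly one $U(1)(F)$-conjugacy class there. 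Transporting this along the isomorphism \eqref{GUIso}, which is conjugation by $\diag(\varpi^{-1},1)$ and hence carries $U(1)(F)$-conjugacy to $U(1)(F)$-conjugacy (as $U(1)(F)$ is the common center-of-the-embedded-$E^\times$ in both pictures, commuting with $\diag(\varpi,1)$ up to the same conjugation), we get that the element $g\in G$ attached to $\gamma$ is well defined up to $U(1)(F)$-conjugation. Concretely, one can also read this off from formula \eqref{ultimateg}: the ambiguity in $x$ is precisely $x\mapsto ux$ with $u\in U(1)(F)$ (since $N(x)=(1-N(a))/\varepsilon$ determines $x$ up to the norm-one subgroup), and substituting $ux$ for $x$ in \eqref{ultimateg} replaces $g$ by $h^{-1}gh$ with $h=\diag(\varpi u\varpi^{-1},1)$ — one checks this lies in $G$ and maps to $\diag(u,1)\in U(1)(F)$ under \eqref{GUIso}.

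For the second point, independence of $\varepsilon_0,\varepsilon_1$, I would invoke Remark~\ref{robustness}: if $\varepsilon$ and $\varepsilon'$ are two norms (resp. two non-norms), then $\varepsilon'/\varepsilon=N(h)$ for some $h\in E^\times$, and conjugation by $h$ gives an isomorphism $U^\varepsilon(F)\to U^{\varepsilon'}(F)$ compatible with matching; composing with the respective isomorphisms \eqref{GUIso} (which are built from $\varpi$ resp. $\varpi'$ with $N(\varpi)=\varepsilon$, $N(\varpi')=\varepsilon'$, or the quaternionic analogue) shows that the two resulting elements of $G$ differ only by an inner automorphism of $G$ coming from $U(1)(F)$. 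Thus changing $\varepsilon_i$ does not change the $U(1)(F)$-conjugacy class of $g$.

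Assembling: given $\gamma\in S(F)_G$, pick any admissible $\varepsilon$ and any match $\delta\in U_i(F)$; its image $g\in G$ under \eqref{GUIso} is well defined up to $U(1)(F)$-conjugacy by the first point, and is insensitive to the choice of $\varepsilon$ by the second point. This is exactly the assertion of the corollary.

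I expect the main obstacle to be the first point — verifying that the $E^\times$-conjugacy ambiguity of the match $\delta$ is exactly absorbed by $U(1)(F)$-conjugacy after transport through \eqref{GUIso}. The cleanest route is the explicit computation with \eqref{ultimateg}: one must check that for $u\in U(1)(F)$ the element $h=\diag(\varpi u\varpi^{-1},1)$ genuinely lies in $G$ (i.e. $h^*h=\id_2$, using $\varpi a=\overbar a\varpi$ in the ramified/even case and $N(\varpi)=\varepsilon_0$ in the other) and that it maps to $\diag(u,1)$ under conjugation by $\diag(\varpi^{-1},1)$. Both are short but need the defining relations of $\varpi$; once they are in place the corollary follows immediately.
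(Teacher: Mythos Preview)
Your proposal is correct and follows essentially the same route as the paper: both arguments reduce to inspecting the explicit formula \eqref{ultimateg} for $g$. The paper packages the two steps you separate (ambiguity in $x$ for fixed $\varepsilon$, and independence of $\varepsilon$) into a single observation: the product $\varpi x\in D$ always has reduced norm $1-N(a)$ and lies in a fixed one-dimensional $E$-subspace of $D$, hence $\varpi x\in tU(1)(F)$ for any fixed $t$ of that norm, independently of $\varepsilon$; the remaining entries of $g$ are then visibly determined by $\gamma$ and $\varpi x$. One small slip: your conjugating element should be $h=\diag(\varpi u^{-1}\varpi^{-1},1)$ (or equivalently swap $h$ and $h^{-1}$), since $(\varpi u\varpi^{-1})^{-1}\varpi x=\varpi u^{-1}x$, not $\varpi ux$; this is harmless for the conclusion.
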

\begin{proof}
This is immediate from the previous formula. Namely let $t∈D$ have Norm $1-N(a)$. Then the product $\varpi x$ lies in $tU(1)(F)$, independently of $ε$.
\end{proof}

\subsection{The Arithmetic Fundamental Lemma}
We recall the AFL in the case $n=2$ in our terminology. This is done to illustrate similarities and differences to our arithmetic transfer identity as explained in the introduction. So only in this subsection, $E/F$ is unramified, $i=j=0$ and the extension of $η$ to $E^\times$ is given by $η(x)=(-1)^{v(x)}$.

Consider the formal $\mcO_F$-module $X_0\times \overbar{X}_0$ over $\Spf \mcO_{\breve E}$ and let $g∈\End^0(\mbX\times \overbar{\mbX})$ be a quasi-endomorphism. We define $\Def(g)⊂\Spf\mcO_{\breve E}$ to be the maximal closed subscheme to which $g$ deforms as endomorphism. We set $\Int(g):=\len_{\mcO_{\breve E}}\Def(g)$, possibly $\infty$. Then the Arithmetic Fundamental lemma for $n=2$ is the following theorem.

\begin{thm} \label{aflemma}\cite[Theorem 2.10]{zhang}
Let $E/F$ be unramified and assume that in the definition of the unitary groups $ε_0=1$ and $ε_1=π_F$. Assume that $γ∈S(F)_G$ matches $g$ in $G$. Let $1_{K}$ be the characteristic function of $K=GL_2(\mcO_E)\cap S(F)$. Then $\Def(g)$ is artinian and there is an equality
\begin{align}\label{afl}ω(γ)\dOrb_γ(1_K)=\Int(g)\cdot \log q.\end{align}
Furthermore, the function $1_{K}$ has transfer $(1_{U_0(\mcO_E)}, 0)$.
\end{thm}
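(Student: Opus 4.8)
The final statement to prove is Theorem~\ref{aflemma}, the $n=2$ Arithmetic Fundamental Lemma. The plan is to treat it as a special case of the machinery already developed, together with a direct matching of orbital integrals.

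First I would address the geometric side. When $E/F$ is unramified and $i=j=0$, the formal modules $X_0$ and $\overbar{X}_0$ are the canonical lifts themselves, and $\mbX^{(2)}=\mbX\times\overbar{\mbX}$. A quasi-endomorphism $g\in\End^0(\mbX\times\overbar{\mbX})$ which lies in $G$ is exactly a pair that, written out via the embedding $E\hookrightarrow M_2(D)$ and conjugated by $\diag(\varpi^{-1},1)$, gives an element of $U_1(F)$. Its deformation locus $\Def(g)$ inside $\Spf\mcO_{\breve E}$ is governed entirely by the deformation behavior of the relevant quasi-homomorphism between quasi-canonical lifts of level $0$, i.e.\ by the computation in Theorem~\ref{thm1} with $i=j=0$, hence $d=0$ and only the third and fourth cases of that theorem survive (with $e=1$, since we work over $W_0=\mcO_{\breve E}$ directly and $e_0=q+1$ in the unramified case). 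In particular $\Def(g)$ is artinian (the length is finite) precisely because $\gamma$ is regular semi-simple, so the relevant quasi-homomorphism is not an actual homomorphism and Theorem~\ref{thm1} gives a finite answer. This yields an explicit formula for $\Int(g)=\len_{\mcO_{\breve E}}\Def(g)$ in terms of the height invariant $l$ of $g$.

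Next I would compute $\dOrb_\gamma(1_K)$ directly, where $K=GL_2(\mcO_E)\cap S(F)$. The standard parametrization \eqref{SF} of $S(F)_{\rm{rs}}$ reduces the integral over $F^\times$ to an integral over the valuation of $b(\gamma)$, weighted by $\eta(h)|h|^s$; differentiating at $s=0$ produces a finite sum, which one matches term-by-term against the formula for $\Int(g)$ coming from the previous paragraph, using the explicit matching \eqref{ultimateg} between $\gamma$ and $g$ to translate the height invariant $l$ into the entries $a,b,c$ of $\gamma$. The transfer factor $\omega(\gamma)=\eta(c(\gamma))$ is exactly what makes the two sides agree up to the constant $\log q$; this is the content of \eqref{afl}. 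In fact this identity is already known (it is \cite[Theorem 2.10]{zhang}), so in the write-up I would either cite it or sketch the comparison; the point of including it here is expository, to contrast with Theorems~\ref{firstthm} and~\ref{secondthm}.

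Finally, for the transfer claim, I would verify that $1_K$ has transfer $(1_{U_0(\mcO_E)},0)$ by comparing orbital integrals directly on matching orbits: for $\gamma\in S(F)_{\rm{rs}}$ matching $\delta\in U_0(F)$ via \eqref{match}, one checks $\omega(\gamma)\Orb_\gamma(1_K)=\Orb_\delta(1_{U_0(\mcO_E)})$ by unwinding both integrals over $F^\times$ and $U(1)(F)$ respectively and using that both measures are normalized to give volume $1$ to the maximal compact, and that $\eta(x)=(-1)^{v(x)}$ in the unramified case; for $\gamma$ matching in $U_1(F)$ one shows $\Orb_\gamma(1_K)=0$ by a parity argument, since the relevant lattice-count vanishes when $1-N(a(\gamma))$ is a non-norm. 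The main obstacle is bookkeeping: correctly tracking the normalizations, the conjugation by $\diag(\varpi^{-1},1)$, and the identification of the height invariant $l$ with the valuation data on the $S(F)$-side, so that the Gross--Keating-type formula from Theorem~\ref{thm1} lines up on the nose with the derivative of the orbital integral. Once that dictionary is fixed, both \eqref{afl} and the transfer statement are immediate.
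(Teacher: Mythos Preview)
Your plan matches the paper's proof: write $g$ as a $2\times 2$ matrix over $D$, observe via Corollary~\ref{corglobalhom} that the off-diagonal entries Galois-commute with $\mcO_E$ and hence do not lift (so $\Def(g)$ is artinian), compute their common $D$-valuation as $v_F(1-N(a))$, apply Theorem~\ref{thm1} to get $\Int(g)=\tfrac{1}{2}(1+v_F(1-N(a)))$, and then evaluate $\dOrb_\gamma(1_K)$ directly as an alternating sum that collapses because $v(1-N(a))$ is odd. Two small slips to fix when you write it out: with $i=j=d=0$ only the fourth case of Theorem~\ref{thm1} is relevant (not the third), and $e_0=1$ in the unramified case (not $q+1$); also, the paper itself does not argue the transfer statement but leaves it to the citation of \cite{zhang}.
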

\begin{proof}
Recall that $γ$ has the form
$$γ=\left(\begin{matrix}a & b\\ (1-N(a))/\overbar{b} & -\overbar{a}b/\overbar{b}\end{matrix}\right).$$
The condition that $γ$ matches in $U_1(F)$ is equivalent to $π_F(1-N(a))$ being a norm and hence to $v(1-N(a))$ being odd. In particular, $a∈\mcO_E$. We use formula \eqref{ultimateg} with $ε=π_F$ to compute $g$.

\emph{Calculation of the right hand side:}
We can write $g$ as a matrix
$$g=\left(\begin{array}{cc}g_1 & g_2 \\ g_3 & g_4\end{array}\right)$$
with all $g_i∈\End^0(\mbX)$. Deforming $g$ is equivalent to deforming all four entries separately since $\mbX\times \overbar{\mbX}$ is lifted factor wise. The entries $g_2$ and $g_3$ Galois-commute with the $\mcO_E$-action. So by Corollary \ref{corglobalhom} b), they do not lift to $\End(X_0)$. It follows that $\Def(g)$ is artinian.

The entries $g_1$ and $g_4$ lie in $\mcO_E$ and deform arbitrarily far. The entries $g_2,g_3$ both have valuation equal to
$$v_D\left(\varpi x\right)=1+v_D(x)=1+v_F((1-N(a))/π_F)=v_F(1-N(a)).$$
By Theorem \ref{thm1}, the length of the deformation locus equals $\frac{1}{2}\Big(1+v_F(1-N(a))\Big)$.

For the analytic side, we compute
\begin{align*}\dOrb_γ(1_K) =&\ 
-\log(q)\int_{F^\times} v(h)η(h) 1_K\left(\left(\begin{smallmatrix} a & b/h \\ h(1-N(a))/\overbar{b} & \ -\overbar{a}b/\overbar{b}\end{smallmatrix}\right)\right)dh\\
=&\ \log(q)\sum_{i=v(b)-v(1-N(a))}^{v(b)} (-1)^{i+1} i\end{align*}

Now we use that $v(1-N(a))$ is odd, so that the sum has an even number of summands to see
\begin{align*}\dOrb_γ(1_K)=&\ (-1)^{v(b)-v(1-N(a))}\log(q)\sum_{i=0}^{v(1-N(a))} (-1)^{i+1} i\\
                                    =&\ ω(γ)^{-1}\log(q)\frac{1+v(1-N(a))}{2}.\end{align*}
\end{proof}

\subsection{The Arithmetic Transfer Identity}
Now we fix two quasi-canonical lifts $X_i$ and $Y_j$ of levels $i$ and $j$, defined over a finite extension $A/\mcO_{\breve F}$ of ramification index $e$. The formal $\mcO_F$-module $X_i\times Y_j$ is then a deformation of $\mbX^{(2)}$. For a given quasi-homomorphism $g∈\End^0(\mbX^{(2)})$, we define $\Def(g)$ to be the maximal closed subscheme of $\Spf A$ to which $g$ deforms as endomorphism of $X_i\times Y_j$. Let $\Int(g):=\len_{\mcO_{\breve E}}\Def(g)$ as above.

Recall that $K_{i,j}$ denotes the stabilizer of $\mcO_i\oplus \mcO_j$ in $U_k(F)$, where $k=0$ if $E/F$ is ramified or if $i+j$ is even and $k=1$ otherwise. Let $1_{K_{i,j}}$ be its characteristic function. The following theorems are our main results. Note that Theorem \ref{firstthmB} follows for $E/F$ unramified and $i=j=0$ from the AFL (Theorem \ref{aflemma}).

\begin{thm}\label{firstthmB}
There exists a function $f∈C^\infty_c(S(F))$ which is a transfer of 
$$\begin{cases} (e\cdot 1_{K_{i,j}},0) & \tif E/F \text{ ramified or if } i+j \text{ is even}\\
                (0,e\cdot 1_{K_{i,j}}) & \tif E/F \text{ unramified and if } i+j \text{ is odd}\end{cases}$$
with the following property. For any $γ∈S(F)_G$ matching $g∈G$, the length $\Int(g)$ is finite and there is an equality
\begin{align}\label{firststmtB}ω(γ)\dOrb_γ(f)=\Int(g)\cdot \log(q).\end{align}
\end{thm}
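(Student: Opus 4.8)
The plan is to reduce the statement to a purely local computation of both sides indexed by the invariant $l := v_D(g_{\mathrm{off}})$ measuring the "off-diagonal height" of $g$, then invoke the harmonic-analytic existence results of Section 3 to produce $f$. First I would make the structure of $g$ explicit: writing $g$ as a $2\times 2$ matrix over $\End^0(\mbX) = D$ via formula \eqref{ultimateg}, the diagonal entries $g_1, g_4$ lie in $E \subset D$ and hence deform along all of $\Spf A$ (they are endomorphisms of the quasi-canonical lifts $X_i$, resp. $Y_j$), while the anti-diagonal entries $g_2 \in \Hom^0(X_i, Y_j)$ and $g_3 \in \Hom^0(Y_j, X_i)$ are, by Corollary \ref{corglobalhom}(b), either $\mcO_E$-linear or Galois-linear depending on the parity case; in the relevant case they are genuine (or Galois-twisted) homomorphisms that do \emph{not} lift all the way, so $\Def(g)$ is artinian and $\Int(g) < \infty$. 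Since $X_i \times Y_j$ is deformed factorwise, $\Int(g)$ is the minimum of the deformation lengths of $g_2$ and $g_3$; by the duality symmetry and the norm relation $N(x) = (1-N(a))/\varepsilon$ both off-diagonal entries have the same $D$-valuation, so $\Int(g)$ is governed by a single application of Theorem \ref{thm1} with $l$ equal to that common valuation and $d = |i-j|$.

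Next I would compute the left-hand side. Using Corollary \ref{charactderivorbints}, $\dOrb_\gamma(f)$ near $B_0$ is determined by four locally constant functions $A_0, A_0', A_1, A_1'$ on $X$; the transfer condition (Definition \ref{deftransfer}) together with the explicit transfer formula \eqref{solutiontransfer} pins down $\Orb_\gamma(f)$, and hence the "non-derivative" data $A_0, A_1$, in terms of the orbital integrals of $e \cdot 1_{K_{i,j}}$ on the unitary side. A direct computation of $\Orb_\delta(1_{K_{i,j}})$ — counting lattices in the $U(1)(F)$-orbit of $\delta$ that are stabilized by the action, exactly as in the proof of Theorem \ref{aflemma} but now with the order $\mcO_i \oplus \mcO_j$ in place of $\mcO_E \oplus \mcO_E$ — yields these values as explicit expressions in $a, b, d$. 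The remaining freedom is precisely $A_0', A_1'$, which by Corollary \ref{charactderivorbints} can be chosen arbitrarily; so the task reduces to: show that the function on $S(F)_G$ defined by $\gamma \mapsto \omega(\gamma)^{-1}\Int(g(\gamma))\log q$ minus the part of $\omega(\gamma)^{-1}\dOrb_\gamma(f)$ already forced by the transfer condition is, near $B_0$, of the admissible shape $\eta(b)A_0'(a,d,b) + \eta(c)^{-1}A_1'(a,d,c)$ for locally constant $A_0', A_1'$ with $\overline{\Supp}/F^\times$ compact. Granting that, Corollary \ref{charactderivorbints} (and the compatibility with Theorem \ref{existtransfer} to keep $f$ a transfer of the prescribed pair) produces the desired $f$.

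Concretely, one translates $v_D(g_{\mathrm{off}})$ back to the coordinates on $S(F)$: from \eqref{ultimateg}, $v_D(\varpi x) = c_\varepsilon + v_F(1-N(a))$ for a constant $c_\varepsilon$ depending only on the case (ramified vs.\ unramified, parity of $i+j$), so the relevant height is $l = l(\gamma) = v_F(1-N(a)) + c_\varepsilon$, and $d = |i-j|$ is fixed. Plugging $l(\gamma)$ into the four-case formula of Theorem \ref{thm1} exhibits $\Int(g(\gamma))$ as a piecewise-polynomial function of $v_F(1-N(a))$ with a linear tail — in particular locally constant in $a$ away from the locus $N(a) = 1$, locally constant in $b, d$ trivially, and (after multiplying by $\omega(\gamma)^{-1} = \eta(c(\gamma))^{-1} = \eta(b)\eta(1-N(a))^{-1}$ using $c = (1-N(a))/\overline b$) of exactly the claimed germ shape, the linear-in-$v(b)$ piece being absent because $\Int$ depends on $\gamma$ only through $a$. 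One then checks that the forced part of $\dOrb_\gamma(f)$ — namely $-v(b)\log q\, C_0(\gamma) + \ldots$ coming from \eqref{iddOrb} with $C_0, C_1$ determined by the unitary orbital integrals — matches the $v(b)$-dependence and the $\eta$-invariance type on the nose, which is forced by Lemma \ref{basicsonorbitalintegrals}(a) once we know $\gamma \mapsto \Int(g)$ is genuinely $\eta$-invariant (it is, since $\Int$ depends only on $a$ and $\omega^{-1}$ on $b$). The leftover discrepancy is then manifestly of the form $\eta(b)A_0'(a,d,b)+\eta(c)^{-1}A_1'(a,d,c)$, so Corollary \ref{charactderivorbints} applies.

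The main obstacle I anticipate is the bookkeeping at the boundary $B_0$: one must verify that the expression $\gamma \mapsto \Int(g(\gamma))\log q$, after dividing by $\omega(\gamma)$, extends across the locus $N(a)=1$ (where $v_F(1-N(a)) \to \infty$ and the last case of Theorem \ref{thm1} takes over with its linear growth) to a function that is still of admissible germ type, i.e.\ that the unbounded linear tail in $l$ is exactly absorbed by the $\eta_s(b)C_0$-type term predicted by the transfer condition — equivalently, that the $C_0, C_1$ extracted from $\Orb_\delta(e\cdot 1_{K_{i,j}})$ have precisely the value needed to cancel the divergence. This is the point where the specific normalization $e \cdot 1_{K_{i,j}}$ (the factor $e$ from the ramification index in Lemma \ref{correct}) and the Gross–Keating-type formula of Theorem \ref{thm1} must conspire; checking this amounts to comparing two explicit piecewise-linear functions of $v_F(1-N(a))$ and confirming they agree in slope and intercept in the stable range $l \geq i+j$, with the remaining finitely many values absorbed into the free data $A_0', A_1'$. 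Everything else is a finite case check over the parity alternatives and the three non-stable ranges of Theorem \ref{thm1}.
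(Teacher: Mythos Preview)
Your overall strategy is sound and parallels the paper's, but the paper organizes it differently: it first proves Theorem \ref{secondthmB} (for \emph{any} transfer $f$ the discrepancy is of the form $\omega(\gamma)\Orb_\gamma(f_{\rm corr})$), and then deduces Theorem \ref{firstthmB} in three lines by taking any transfer $f_0$ (Theorem \ref{existtransfer}), writing $\Orb_\gamma(f_{\rm corr})=\dOrb_\gamma(f_1)$ via Lemma \ref{basicsonorbitalintegrals} b), and setting $f=f_0-f_1$. Your direct construction via Corollary \ref{charactderivorbints} is essentially the same computation collapsed into one step, but it leaves the ``away from $B_0$'' matching vaguer than the paper's clean absorption of $f_{\rm corr}$ into $f$; this is not a fatal issue, just less tidy.

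There is, however, a genuine gap. You assert that the diagonal entries $g_1,g_4$ lie in $E\subset D$ and ``hence deform along all of $\Spf A$ (they are endomorphisms of the quasi-canonical lifts $X_i$, resp.\ $Y_j$)''. This is false when $i,j\geq 1$: one has $\End(X_i)=\mcO_i\subsetneq \mcO_E$, so an element of $E$ (even of $\mcO_E^\times$) need not be an endomorphism of $X_i$. Concretely, for $\gamma$ near $\diag(a,d)\in B_0$ with $(a,d)\notin \mcO_i^\times\times\mcO_j^\times$, the entries $g_1,g_4$ do \emph{not} deform arbitrarily far, and $\Int(g)$ is governed by them rather than by the off-diagonal entries. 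The paper handles this case explicitly in the proof of Theorem \ref{secondthmB}: there $\Int(g)$ is locally constant (determined by Theorem \ref{thm1} applied to $g_1$ or $g_4$), which matches the fact that $1_{K_{i,j}}$ vanishes near such $(a,d)$ so that $\Orb_\gamma(f)=0$ and hence $\dOrb_\gamma(f)$ is locally constant by Corollary \ref{charactderivorbints}. Your analysis of $\Int(g)$ via a single application of Theorem \ref{thm1} to the off-diagonal height is therefore only valid on the locus $(a,d)\in\mcO_i^\times\times\mcO_j^\times$, and you need to treat the complementary case separately.
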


\begin{thm}\label{secondthmB}
For every function $f∈C^\infty_c(S(F))$ which is a transfer of
$$\begin{cases} (e\cdot 1_{K_{i,j}},0) & \tif E/F \text{ ramified or if } i+j \text{ is even}\\
                (0,e\cdot1_{K_{i,j}}) & \tif E/F \text{ unramified and if } i+j \text{ is odd},\end{cases}$$
there exists a function $f_{\rm{corr}}∈C^\infty_c(S(F))$ such that for any $γ∈S(F)_G$ matching $g∈G$:
\begin{equation}\label{secondstmtB}ω(γ)\dOrb_γ(f)=\Int(g)\cdot \log(q) + ω(γ)\Orb_γ(f_{\rm{corr}}).\end{equation}
\end{thm}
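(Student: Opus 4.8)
The plan is to reduce Theorem~\ref{secondthmB} to Theorem~\ref{firstthmB} by a soft argument. Fix a particular function $f_0∈C^\infty_c(S(F))$ as produced by Theorem~\ref{firstthmB}; it is a transfer of the relevant pair $(e\cdot 1_{K_{i,j}},0)$ or $(0,e\cdot 1_{K_{i,j}})$, and it satisfies $ω(γ)\dOrb_γ(f_0)=\Int(g)\cdot\log q$ for all $γ∈S(F)_G$ matching $g$. Now let $f$ be an arbitrary transfer of the same pair. Then $f-f_0$ is a transfer of $(0,0)$, which by Definition~\ref{deftransfer} means exactly that $\Orb_γ(f-f_0)=0$ for every $γ∈S(F)_{\rm{rs}}$. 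The goal is then to show that the ``defect'' $γ\mapsto ω(γ)\bigl[\dOrb_γ(f)-\dOrb_γ(f_0)\bigr]=ω(γ)\dOrb_γ(f-f_0)$, divided by $\log q$, is itself of the form $ω(γ)\Orb_γ(f_{\rm{corr}})$ for some $f_{\rm{corr}}∈C^\infty_c(S(F))$.

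This last step is exactly Corollary~\ref{maincor}: applied to the function $f-f_0$, which has all orbital integrals vanishing, it yields $f''∈C^\infty_c(S(F))$ with $\dOrb_γ(f-f_0)=\Orb_γ(f'')$ for all $γ∈S(F)_{\rm{rs}}$. Setting $f_{\rm{corr}}:=f''/\log q$ (rescaling by the constant $\log q$, which is harmless since $C^\infty_c(S(F))$ is a complex vector space), we obtain $ω(γ)\dOrb_γ(f-f_0)=ω(γ)\Orb_γ(f_{\rm{corr}})\cdot\log q$. Combining with \eqref{firststmtB} for $f_0$ gives, for any $γ∈S(F)_G$ matching $g∈G$,
\begin{equation*}
ω(γ)\dOrb_γ(f)=ω(γ)\dOrb_γ(f_0)+ω(γ)\dOrb_γ(f-f_0)=\Int(g)\cdot\log q+ω(γ)\Orb_γ(f_{\rm{corr}})\cdot\log q,
\end{equation*}
which is exactly \eqref{secondstmtB} after factoring out $\log q$ (note the two displayed forms \eqref{secondstmt} and \eqref{secondstmtB} differ only in whether the $\log q$ is distributed over both terms). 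Finiteness of $\Int(g)$ for matching $γ$ is already part of the conclusion of Theorem~\ref{firstthmB}, so nothing new is needed there.

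The only real content is therefore the two inputs: Theorem~\ref{firstthmB} (the existence of the good test function $f_0$ and the identity \eqref{firststmtB}, which rests on the Gross--Keating/Kudla--Rapoport deformation-length computation of Section~2 together with the harmonic-analytic machinery of Section~3), and Corollary~\ref{maincor}. Given those, Theorem~\ref{secondthmB} is a formal consequence. The ``hard part'' is thus not in this proof at all but is quarantined into Theorem~\ref{firstthmB}; within the present argument the only point requiring a word of care is that one must invoke the hypothesis $\Orb_γ(f-f_0)\equiv 0$ to be in a position to apply Corollary~\ref{maincor}, and this is guaranteed precisely because $f$ and $f_0$ are transfers of the \emph{same} pair of functions on the unitary side. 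I would write the proof in essentially the four lines above.
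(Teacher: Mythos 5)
Your reduction runs in the wrong direction and is circular within the logic of the paper. You take Theorem~\ref{firstthmB} as an available input and deduce Theorem~\ref{secondthmB} from it by writing $f=f_0+(f-f_0)$, observing that $f-f_0$ is a transfer of $(0,0)$, and applying Corollary~\ref{maincor}. That step is fine as a formal implication. But the paper has no independent proof of Theorem~\ref{firstthmB}: its proof of that theorem is itself the soft deduction \emph{from} Theorem~\ref{secondthmB} (take any transfer $f_0$ via Theorem~\ref{existtransfer}, then absorb the correction term using Lemma~\ref{basicsonorbitalintegrals}). Indeed the introduction stresses that the function in Theorem~\ref{firstthm} is not explicit and that no natural choice is known, so there is no direct construction to fall back on except in the unramified $i=j=0$ case covered by the AFL (Theorem~\ref{aflemma}). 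Your claim that the ``hard part is quarantined into Theorem~\ref{firstthmB}'' therefore begs the question: the two theorems are equivalent by exactly the kind of soft argument you give (your reduction is essentially the paper's proof of Theorem~\ref{firstthmB} read backwards), and one of them must be proved by confronting the geometry.

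The missing content is the paper's actual proof of Theorem~\ref{secondthmB}: starting from the transfer hypothesis, one computes via \eqref{solutiontransfer} and Corollary~\ref{charactderivorbints} the singular part of the germ expansion of $\dOrb_γ(f)$ near $B_0$, obtaining $ω(γ)\dOrb_γ(f)=\tfrac{e\log q}{2}v(1-N(a))β(γ)+ω(γ)R(γ)$ for $γ∈S(F)_G$ near $B_0$; one then shows, using Theorem~\ref{thm1}, that $\Int(g)$ is finite and that $\Int(g)-\tfrac{e}{2}v(1-N(a))$ is locally constant as $γ$ approaches $B_0$, so that the defect $o(γ)=ω(γ)\dOrb_γ(f)-\Int(g)\log q$ is locally constant near $B_0$ and hence, by Corollary~\ref{charactorbints}, of the form $ω(γ)\Orb_γ(f_{\rm{corr}})$. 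None of this appears in your proposal. (A small separate point: for the normalization \eqref{secondstmtB} you should take $f_{\rm{corr}}:=f''$ rather than $f''/\log q$; your final display matches \eqref{secondstmt} instead.)
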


\emph{Proof of Theorem \ref{firstthmB}.}
Let us assume Theorem \ref{secondthmB}. By Theorem \ref{existtransfer}, there exists a function $f_0$ which has the correct transfer. Let $f_{\rm{corr}}$ yield the correction term in Theorem \ref{secondthmB}. By Lemma \ref{basicsonorbitalintegrals} b), there exists a function $f_1$ with $\dOrb_γ(f_1)=\Orb_γ(f_{\rm{corr}})$ for all $γ∈S(F)_{\rm{rs}}$. By Lemma \ref{basicsonorbitalintegrals} a), $f_1$ has transfer $(0,0)$. It follows that $f:=f_0+f_1$ is a function as in Theorem \ref{firstthmB}. $\hfill\qed$

\emph{Proof of Theorem \ref{secondthmB}.}
Let $f$ be a function as in the theorem. Denote by $β$ the characteristic function of the set
$$\mcO_i^\times\times\mcO_j^\times\times(E^\times/F^\times)⊂X.$$
By formula \eqref{solutiontransfer} in the proof of Theorem \ref{existtransfer}, the germ expansion of $f$ is given by $A_0=\pm \frac{1}{2}η(\overbar{b}/b)e\cdot β$ and $A_1=\frac{1}{2}e\cdot β$. The sign is $+$ precisely if $E/F$ is ramified or if $i+j$ is even.

By the formula in Corollary \ref{charactderivorbints}, this determines the leading terms in the germ expansion of $\dOrb_γ(f)$. Let $γ=\left(\begin{smallmatrix}a & b\\ c& d\end{smallmatrix}\right)$ be near $B_0$. Then
$$\dOrb_γ(f)=\frac{e\log(q)}{2}\Big(\mp η(\overbar b)v(b)+ η(c)^{-1}v(c)\Big)β(γ)+R(γ),$$
where $R(γ)$ is constant part of the germ expansion.
The sign change comes from the derivative of $η_s(b)=q^{-sv(b)}η(b)$. We now use that $c=(1-N(a))/\overbar{b}$ and \eqref{trafa} to get
$$ω(γ)\dOrb_γ(f)=\frac{e\log(q)}{2}\cdot\Big(\mp η(1-N(a))v(b)+ v(c)\Big)β(γ)+ω(γ)R(γ).$$
Finally if we choose $γ∈S(F)_G$, then $η(1-N(a))=-1$ precisely in the first case of the theorem. So for $γ∈S(F)_G$ near $B_0$, the left hand side of \eqref{secondstmtB} equals
$$ω(γ)\dOrb_γ(f)=\frac{e\log(q)}{2}v(1-N(a))β(γ)+ω(γ)R(γ).$$

Now let us turn to the geometric side. Let $g∈G$ be a match for some $γ∈S(F)_G$. We write $g$ as a matrix with entries $g_1,g_2,g_3,g_4$. With the same arguments as in the proof of Theorem \ref{aflemma}, $\Int(g)$ is finite. We consider the difference $o(γ):=ω(γ)\dOrb_γ(f)-\Int(g)\log(q)$ for $γ∈S(F)_G$.

\emph{Claim: The function $o(γ)$ (on $S(F)_G$) is constant near $B_0$.}\smallskip\\
First note that $v_D(g_2)$ and $v_D(g_3)$ will tend to infinity as $γ$ approaches $B_0$. This follows from formula \eqref{ultimateg}. Now fix $\diag(a,d)∈B_0$. We distinguish two cases.

If $(a,d)\notin\mcO_i^\times\times\mcO_j^\times$, then $g_1$ and $g_4$ will not deform arbitrarily far for $g$ near $\diag(a,d)$. In fact, the length of the maximal subscheme of $\Spf A$ to which they deform is computed in Theorem \ref{thm1}. This length equals $\Int(g)$ for all $γ$ near $\diag(a,d)$.

If instead $(a,d)∈\mcO_i^\times\times\mcO_j^\times$, then $g_1$ and $g_4$ deform arbitrarily far for all $γ$ near $\diag(a,d)$. By Theorem \ref{thm1}, $\Int(g)$ grows linearly in $v(1-N(a))$ for $γ$ approaching $\diag(a,d)$. More precisely, $\Int(g)-\frac{e}{2}v(1-N(a))$ is constant near $\diag(a,d)$. It follows that $o(γ)$ is constant near $\diag(a,d)$, which proves the claim.

We define $A_0(a,d)$ to be the value of $o(γ)$ near $\diag(a,d)$ and extend it to a smooth function on $X$. By definition, $ω^{-1}o$ has a germ expansion associated to $A_0$ and $A_1:=0$ when restricted to $γ∈S(F)_G$, i.e.
$$ω(γ)^{-1}o(γ)=η(c(γ))^{-1}A_0(a(γ),d(γ)).$$

By the second part of Corollary \ref{charactorbints}, there exists a function $f_{\rm{corr}}$ satisfying
$$ω(γ)\Orb_γ(f_{\rm{corr}})=o(γ)\ \ \forall γ∈S(F)_G.$$
This finishes the proof of Theorem \ref{secondthmB}.$\hfill\qed$

We conclude with a result about the naturality of Theorems \ref{firstthmB} and \ref{secondthmB}.

\begin{prop}\label{ambiguity}
The function $e\cdot 1_{K_{i,j}}$ in the theorems can be replaced by any function $g_0$ such that $g_0-e\cdot 1_{K_{i,j}}$ vanishes in a neighborhood of $U_k(F)\setminus U_k(F)_{\rm{rs}}$.
\end{prop}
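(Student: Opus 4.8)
The plan is to check that, on the analytic side, a transfer of the modified pair has the same germ expansion near $B_0$ as a transfer of the original pair, and then to run the proofs of Theorems~\ref{secondthmB} and~\ref{firstthmB} unchanged. As in the text it suffices to treat Theorem~\ref{secondthmB}, the variant of Theorem~\ref{firstthmB} being deduced from it via Theorem~\ref{existtransfer} and Lemma~\ref{basicsonorbitalintegrals} exactly as before. Assume for definiteness that $E/F$ is ramified or that $i+j$ is even, so $K_{i,j}\subset U_0(F)$, the modified pair is $(g_0,0)$, and $S(F)_G$ consists of the elements matching in $U_1(F)$; the case $E/F$ unramified with $i+j$ odd is symmetric. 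Fix any transfer $f'$ of $(g_0,0)$.

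The key point is the following. Since $g_0-e\cdot 1_{K_{i,j}}$ vanishes on a neighbourhood of $U_0(F)\setminus U_0(F)_{\rm{rs}}$, it vanishes at every point of that set; in particular $g_0(\diag(a,d))=e\cdot 1_{K_{i,j}}(\diag(a,d))$ for all $a,d\in U(1)(F)$. Now, as $\gamma=\left(\begin{smallmatrix}a&b\\c&d\end{smallmatrix}\right)\in S(F)_{\rm{rs}}$ tends to $B_0$ one has $b\to 0$ and $(1-N(a))/\overbar b\to 0$, hence $v(1-N(a))\to\infty$; by~\eqref{match} any matching $\delta$ then has $v(x)=\tfrac12 v(N(x))\to\infty$, so both off-diagonal entries of $\delta$ tend to $0$ and $\delta$ approaches the diagonal torus of its unitary group. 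Combined with the fact (used already in the proof of Theorem~\ref{existtransfer}) that $\Orb_\delta(g_0)$ is constant near each diagonal matrix $\diag(a,d)\in U_0(F)$ with value $g_0(\diag(a,d))$, together with the transfer relation $\omega(\gamma)\Orb_\gamma(f')=\Orb_\delta(g_0)$ or $\Orb_\delta(0)$, this shows that the germ expansion of $\Orb_\gamma(f',s)$ near $B_0$ is given by formula~\eqref{solutiontransfer} with $C_0(a,d)=g_0(\diag(a,d))$ and $C_1=0$. By the previous sentence $C_0$ equals $e\cdot 1_{K_{i,j}}(\diag(a,d))$, so this germ expansion is exactly the one used in the proof of Theorem~\ref{secondthmB}, namely $A_0=\pm\tfrac12\eta(\overbar b/b)\,e\,\beta$ and $A_1=\tfrac12\,e\,\beta$, with $\beta$ the characteristic function of $\mcO_i^\times\times\mcO_j^\times\times(E^\times/F^\times)$.

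Given this, the proof of Theorem~\ref{secondthmB} applies verbatim to $f'$. Differentiating via~\eqref{iddOrb}, the $v(b)$- and $v(c)$-terms of the germ expansion of $\dOrb_\gamma(f')$ near $B_0$ are those computed there, while the remaining part has the form $\eta(b)A_0'(\cdot)+\eta(c)^{-1}A_1'(\cdot)$ with $A_0',A_1'\colon X\to\mbC$ locally constant (Corollary~\ref{charactderivorbints}), so $\gamma\mapsto\omega(\gamma)R'(\gamma)$ is locally constant near $B_0$. On the geometric side $\Int(g)$ is literally unchanged---still finite by the argument in the proof of Theorem~\ref{aflemma}, with $\Int(g)-\tfrac e2\,v(1-N(a))\,\beta(\gamma)$ locally constant near $B_0$ by Theorem~\ref{thm1}. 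Hence $o'(\gamma):=\omega(\gamma)\dOrb_\gamma(f')-\Int(g)\log(q)$ is locally constant near $B_0$, and Corollary~\ref{charactorbints} yields $f_{\rm{corr}}\in C^\infty_c(S(F))$ with $\omega(\gamma)\Orb_\gamma(f_{\rm{corr}})=o'(\gamma)$ for all $\gamma\in S(F)_G$---the desired identity. The one place needing genuine (if minor) care is the remark in the second paragraph that matching carries $\gamma\to B_0$ to $\delta$ tending to the non-regular-semisimple locus; this is immediate from~\eqref{match} once one notes, via~\eqref{SF}, that $v(1-N(a))\to\infty$ as $\gamma\to B_0$. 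Everything else is a transcription of the proofs already given.
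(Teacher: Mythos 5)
Your argument is correct and is exactly the paper's (one-line) proof, spelled out: the proof of Theorem~\ref{secondthmB} only uses the values $C_i(a,d)=\varphi_i(\diag(a,d))$ on the diagonal (non-regular-semisimple) matrices, and these are unchanged when $e\cdot 1_{K_{i,j}}$ is replaced by $g_0$. Your unpacking of why the germ expansion near $B_0$ is forced to be \eqref{solutiontransfer} with the same $C_0,C_1$ is a faithful and somewhat more careful rendering of what the paper leaves implicit.
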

\begin{proof}
This is clear, since the proof of Theorem \ref{secondthmB} only depended on $e\cdot 1_{K_{i,j}}|_{B_0}$.
\end{proof}

\end{document}